\theoremstyle{plain}
\newtheorem{theorem}{Theorem}[section]
\newtheorem{proposition}[theorem]{Proposition}
\newtheorem{lemma}[theorem]{Lemma}
\newtheorem{remark}[theorem]{Remark}
\newtheorem{question}[theorem]{Question}
\newtheorem{maintheorem}{Theorem} 
\newtheorem{maincorollary}{Corollary}  
\let\c@maincorollary\c@maintheorem % Share the theorem counter
\theoremstyle{definition} 
\newtheorem*{ack}{Acknowledgements}
\newcommand{\N}{\mathbb{N}}
\newcommand{\R}{\mathbb{R}}
\let\altphi\phi
\let\phi\varphi
\let\varphi\altphi
\let\altphi\undefined
\newcommand{\Ric}{{\rm Ric}}
\newcommand{\II}{\mathrm{I\!I}}
\author{Davi Maximo}
\address{Department of Mathematics, University of Pennsylvania}
\author{Philipp Reiser}
\address{Department of Mathematics, University of Fribourg}
\author{Daniele Semola}
\address{Faculty of Mathematics, University of Vienna}
\begin{document}
	
	\title[Ricci curvature, minimal hypersurfaces, Betti numbers]{Ricci curvature and minimal hypersurfaces 
    with large Betti numbers}

	\maketitle

    \begin{abstract}
     {%\color{blue}
     In any dimension $n+1\ge 4$ we construct a sequence of closed $(n+1)$-dimensional Riemannian manifolds with positive Ricci curvature admitting embedded two-sided minimal hypersurfaces such that the following hold: 
     \begin{itemize}
     \item[(i)] any such hypersurface has Morse index one;
     \item[(ii)] the first Betti numbers of the hypsersurfaces are not uniformly bounded along the sequence.
     \end{itemize}}
    \end{abstract}

\section{Introduction}

Recent years have seen significant advances in the existence theory of minimal hypersurfaces in higher-dimensional Riemannian manifolds, largely propelled by the influential work of F.C. Marques and A. Neves \cite{MaNe14, MaNe16, MaNe17, MaNe21}. Their contributions refined and extended the min-max framework originally developed by F. Almgren and J. Pitts in the 1980s, and have inspired a new wave of results in the field. Notably, the works X. Zhou \cite{Zh20}, A. Song \cite{Son23}, and others \cite{ChMan20,IMN,LioMaNe} have further built upon this foundation.

A key feature of this theory is its reliance on weak convergence in the varifold setting, which ensures the existence of a limiting minimal hypersurface but does not provide precise control over its topology -- leaving open questions about the specific geometric structure of the resulting surface. However, a remarkable strength of these modern refinements is their ability to predict the Morse index of the minimal hypersurface, offering insight into its stability and the number of independent directions in which it can be deformed to reduce area. 

 This interplay between existence, index prediction, and topological ambiguity underscores the power of the theory and its ongoing evolution in geometric analysis. On several instances, estimates that show the Morse index controlling the topology of the hypersurface can be established, $e.g.$~\cite{ACS18,ChKeMa17,ChMa23,  MR20, MaxVP,So23}. However, as the ambient geometry becomes more complex, the interplay between index and topology becomes subtler, and many fundamental questions remain open. 
 
 One guiding question in this direction is the following folklore conjecture: if $\Sigma^{n}$ is a closed, two-sided, minimal hypersurface of Morse index 1 on a {closed} manifold $(M^{n+1},g)$ of positive Ricci curvature, then the first Betti number $b_1(\Sigma^{n})$ must be bounded by a universal constant $c(n)$, {see \cite[Section 8]{NevesICM}}. This is known to be the case when $n+1=3$ \cite{Yau87}. The main result of this paper is the construction of counterexamples to this conjecture starting in dimension $n+1=4$: 

\begin{maintheorem}\label{mainthm2}
   For $n+1\geq 4$, there exists a sequence of smooth, {closed}, $(n+1)$-dimensional, Riemannian manifolds $(M^{n+1}_k,g_k)$ with embedded, two-sided minimal hypersurfaces $\Sigma_k\subset M_k$ satisfying the following properties:
\begin{itemize}
\item[i)] $\Ric_{g_k}> 0$ for any $k\in\N$;
\item[ii)] $\mathrm{index}(\Sigma_k)=1$ for any $k\in\N$;
\item[iii)] $b_1(\Sigma_k)\to \infty$ as $k\to\infty$, where $b_1$ stands for the first Betti number.
\end{itemize}
\end{maintheorem}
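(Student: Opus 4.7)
The plan is to realize each $M_k$ as the Riemannian double of a simply-connected filling $N_k$ of a closed $n$-manifold $\Sigma_k$ with $b_1(\Sigma_k)\to\infty$, equipped with a reflection-symmetric metric of positive Ricci curvature. The fixed set of the involution will be exactly $\Sigma_k$, which is therefore totally geodesic (hence two-sided and minimal) in $(M_k, g_k)$.

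Topologically, I would take $\Sigma_k = \#^k(S^1\times S^{n-1})$, so that $b_1(\Sigma_k)=k$. For $n\geq 3$ this hypersurface bounds the simply-connected boundary-connected-sum $N_k = \natural^k(D^2\times S^{n-1})$, and the double $M_k = N_k\cup_{\Sigma_k} N_k$ is diffeomorphic to $\#^k(S^2\times S^{n-1})$. By the work of Sha--Yang, this is precisely one of the manifolds which admits metrics of positive Ricci curvature for every $n\geq 3$. To produce the required metric $g_k$, I would carry out the Sha--Yang construction equivariantly: the base product metric on $S^2\times S^{n-1}$ carries an obvious reflection on the $S^2$-factor whose fixed set is $S^1\times S^{n-1}$, and each surgery plug of the form $D^2\times S^{n-1}$ can be endowed with a compatible reflection symmetry along a $\tau$-invariant tube, so that the reflection $\tau$ extends to an isometry of the final metric $g_k$ with fixed set $\Sigma_k$.

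To pin down the Morse index, observe that since $\Sigma_k$ is totally geodesic the Jacobi operator reduces to $L_{\Sigma_k} = -\Delta_{\Sigma_k} - \Ric_{g_k}(\nu,\nu)$. Positivity of Ricci curvature immediately yields $\mathrm{index}(\Sigma_k)\geq 1$ by testing with constants. For the matching upper bound, I would tune the construction parameters (in particular the profile of the warping in the Sha--Yang plugs) so that $\sup_{\Sigma_k}\Ric_{g_k}(\nu,\nu)$ is strictly smaller than the first nonzero Laplace eigenvalue $\lambda_1(\Sigma_k, g_k|_{\Sigma_k})$; a Rayleigh-quotient argument on the $L^2$-orthogonal complement of the constants then gives $\mathrm{index}(\Sigma_k)\leq 1$, so $\mathrm{index}(\Sigma_k)=1$ and together with $b_1(\Sigma_k)=k\to\infty$ all three properties of the theorem would be established.

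The main obstacle is the equivariant metric construction: one must execute every step of the Sha--Yang surgery while simultaneously preserving (a) the reflection symmetry, (b) positivity of Ricci curvature throughout, and (c) uniform quantitative control, in $k$, of both the normal Ricci curvature along $\Sigma_k$ and the first eigenvalue $\lambda_1(\Sigma_k)$. The last point is the delicate one: the Sha--Yang surgery necks can pinch and drive $\lambda_1(\Sigma_k)$ toward zero, so the calibration needed for the index bound must be maintained uniformly along the sequence. Making this work in all dimensions $n+1\geq 4$ and at every stage of the iterated surgery is what would require the bulk of the technical work.
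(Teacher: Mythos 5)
Your overall architecture is the same as the paper's: an equivariant Sha--Yang surgery construction whose reflection fixed-point set is the hypersurface (your choice $\Sigma_k=\#_k(S^1\times S^{n-1})$, $M_k=\#_k(S^2\times S^{n-1})$ is exactly the special case $M=S^n$ of the paper's Theorem \ref{mainthm}), followed by the spectral comparison $\sup_{\Sigma_k}\Ric(\nu,\nu)<\lambda_1(\Sigma_k)$ to force index one. However, at the decisive step you stop: you propose to obtain this inequality by ``tuning the profile of the warping in the Sha--Yang plugs'' and you yourself flag this as the unresolved main obstacle. This is a genuine gap, and it is precisely the point the paper resolves with a separate tool, namely Lemma \ref{L:metric_tot_geod} and Proposition \ref{P:deformation}: if a two-sided totally geodesic hypersurface in a positive-Ricci manifold has induced metric of non-negative scalar curvature, then one can deform the ambient metric in an arbitrarily small neighbourhood of the hypersurface (using the second-order model $g_t=g_N+t^2h$ with $h=\Ric^{g_N}-\frac{\mathrm{scal}^{g_N}+\varepsilon}{n-1}g_N$, glued via Wraith's theorem) so that the ambient Ricci curvature stays positive, the hypersurface stays totally geodesic, the induced metric is \emph{unchanged}, and $\Ric(\nu,\nu)\equiv\varepsilon$ exactly. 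Since the induced metric, and hence $\lambda_1$, is frozen, one simply chooses $\varepsilon<\lambda_1$ afterwards; the Jacobi operator becomes $\Delta+\varepsilon$ and the index is $1$ for free. In particular your worry about uniform-in-$k$ control is beside the point -- each $k$ is handled separately -- but without a mechanism like this deformation, even for a single fixed $k$ it is unclear how to decouple $\Ric(\nu,\nu)$ from $\lambda_1$ (global rescaling moves both the same way, and shrinking the circle factor does not make the normal Ricci curvature small inside the plugs), so your proposal as stated does not establish the index upper bound.

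Two further steps that your sketch glosses over, and that the paper has to address explicitly: first, to invoke any such argument one must verify that the induced metric on the fixed-point hypersurface has non-negative (in the paper, positive) scalar curvature, which requires a concrete analysis of the warping functions in the $D^2\times S^{n-1}$ plugs (the paper's Lemma 3.1, controlling $-2(n-1)f''/f+(n-1)(n-2)(1-f'^2)/f^2$ via the choice $\alpha\lambda_0^2<n-2$); second, the starting product metric on $S^1\times M$ has only non-negative Ricci curvature (it vanishes in the $S^1$-direction), so after surgery one must still deform to strictly positive Ricci curvature while preserving the reflection symmetry -- the paper does this with Ehrlich's deformation theorem, arranged to respect the isometry group, before applying Proposition \ref{P:deformation}. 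Your lower bound $\mathrm{index}\geq 1$ by testing with constants is correct and matches the paper.
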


To describe our construction in detail, we recall that if $X^n$ is a closed manifold, the {\it suspension} $\Sigma_0X$ of $X$ {%\color{ForestGreen} 
(alternatively also called the \emph{spinning} of $X$)} is a $(n+1)$-dimensional manifold obtained by a surgery on $X\times S^1$ as follows: let $D^n\subset X$ be an embedded disk and define (see \cite{Du24,GR25,Re24}): 
\begin{equation*}
\Sigma_0X = \left((X\setminus D^n )\times S^1 \right) \cup_{S^{n-1}\times S^1} \left(S^{n-1}\times D^2\right)\, .
\end{equation*}
An interesting feature of $\Sigma_0(\cdot)$ is that it preserves various topological properties, such as the fundamental group (see \cite[Lemma 5.2]{GR25}). 

Theorem \ref{mainthm2} is a direct consequence of the following result:
 
\begin{maintheorem}\label{mainthm}
Let $M^n$, $n\geq 3$, be a closed manifold that admits a Riemannian metric of positive Ricci curvature. Then, for all $\ell\geq 0$, there exists a Riemannian metric of positive Ricci curvature on
        \begin{equation*} 
        \Sigma_0 M\#_\ell (S^2\times S^{n-1}) 
        \end{equation*}
         and a totally geodesic two-sided embedding
         \begin{equation*} 
         M\#(-M)\#_\ell(S^1\times S^{n-1})\subseteq \Sigma_0 M\#_\ell (S^2\times S^{n-1}) 
         \end{equation*}
         of index one.
\end{maintheorem}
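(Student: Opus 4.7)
My plan is to realize the target hypersurface as the fixed locus of an isometric $\mathbb{Z}_2$-action on the ambient manifold and to construct a positive Ricci metric equivariantly via Sha--Yang--Reiser type surgery. From the decomposition $\Sigma_0 M = (M \setminus D^n) \times S^1 \cup_{S^{n-1} \times S^1} (S^{n-1} \times D^2)$, let $\tau$ act by $\theta \mapsto -\theta$ on the $S^1$ factor, extended to reflection of $D^2$ across a diameter; its fixed set is $\mathrm{Fix}(\tau) = \bigl((M \setminus D^n) \times \{0, \pi\}\bigr) \cup \bigl(S^{n-1} \times [-1,1]\bigr) \cong M \# (-M)$, the double of $M \setminus D^n$. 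A direct check shows that $\tau$ interchanges the two components of the complement of its fixed set, so $\Sigma := M \# (-M)$ separates $\Sigma_0 M$ and is automatically two-sided.

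To construct a $\tau$-invariant metric of positive Ricci, I would use a Sha--Yang / Reiser type model: on $(M \setminus D^n) \times S^1$ interpolate from a positive Ricci product to a doubly warped product $dr^2 + \phi(r)^2 g_{S^{n-1}} + \psi(r)^2 d\theta^2$ near the surgery boundary, and fill the surgery region $S^{n-1} \times D^2$ (with $D^2$ in polar coordinates) by a matching rotationally symmetric metric; the warping profiles can be chosen so that Ricci positivity is preserved, while $\tau$-invariance is automatic since all data depend only on the radial variables. The fixed set of an isometry being totally geodesic, $\Sigma$ is automatically so. For the $\ell$ connect sums with $S^2 \times S^{n-1}$, use that $S^2 \times S^{n-1}$ carries a natural $\mathbb{Z}_2$ action (reflecting $S^2$ across an equator, identity on $S^{n-1}$) with fixed set $S^1 \times S^{n-1}$; applying $\ell$ equivariant connected sums of this type at points of $\Sigma$ via the constructions of \cite{Re24,GR25} preserves positive Ricci and enlarges the fixed locus by a connected sum with $S^1 \times S^{n-1}$ at each step, yielding the totally geodesic hypersurface $\Sigma_\ell = M \# (-M) \#_\ell (S^1 \times S^{n-1})$ inside $\Sigma_0 M \#_\ell (S^2 \times S^{n-1})$.

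For the Morse index, the Jacobi operator on the totally geodesic $\Sigma_\ell$ is $L = \Delta_{\Sigma_\ell} + \Ric_g(\nu, \nu)$. Since $\Ric_g > 0$, the constant function is a strictly negative direction and $\mathrm{index}(\Sigma_\ell) \geq 1$. The hard part will be the upper bound: I would tune the surgery parameters (the length of the $S^1$ factor and the widths of the necks) so that $\sup_{\Sigma_\ell} \Ric_g(\nu, \nu)$ is strictly smaller than the first nonzero eigenvalue of $\Delta_{\Sigma_\ell}$, in which case $L$ has a single negative eigenvalue. This balance is the main obstacle, because $\lambda_1(\Sigma_\ell)$ can be expected to decay as $\ell$ grows due to the long necks introduced by the surgeries, so the parameters must be chosen $\ell$-dependently while still keeping $\Ric_g > 0$ on the ambient. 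An alternative would be to realize $\Sigma_\ell$ as the area-maximizing slice of the natural one-parameter foliation by parallel hypersurfaces on either side of $\Sigma_\ell$ and to invoke a Marques--Neves type index bound for one-parameter sweepouts.
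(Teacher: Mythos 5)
Your equivariant picture is the right one and matches the paper: the reflection of the $S^1$ factor, extended over the surgery caps, is an isometry of a Sha--Yang type doubly warped metric, and its fixed set is the double of $M\setminus\bigsqcup D^n$ with the handles, i.e.\ $M\#(-M)\#_\ell(S^1\times S^{n-1})$, automatically totally geodesic and two-sided. (The paper performs all $\ell+1$ surgeries at once on $S^1\times M$, which is cleaner than your iterated ``equivariant connected sums'' with $S^2\times S^{n-1}$: there is no off-the-shelf equivariant positive-Ricci connected sum that also controls the fixed hypersurface, and note moreover that the product metric $r_1^2ds_1^2+g$ you start from is only \emph{non-negatively} Ricci curved in the circle direction, so an extra equivariant perturbation \`a la Ehrlich is needed before one even has $\Ric>0$ on the ambient space.)

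The genuine gap is the index-one upper bound, which you flag but do not resolve, and neither of your two suggestions works as stated. ``Tuning the surgery parameters'' gives you no handle on $\Ric(\nu,\nu)$ along the hypersurface relative to $\lambda_1(\Delta_{\Sigma_\ell})$: before perturbation the normal Ricci curvature vanishes on the product region and is dictated by the warping functions in the caps, and after the perturbation making $\Ric>0$ you have no quantitative control; also note that $\ell$ is fixed in the theorem, so the decay of $\lambda_1$ in $\ell$ is not the issue. The sweepout alternative fails too: being the area-maximizing slice of one particular foliation only exhibits one unstable direction and does not bound the Morse index of a \emph{given} minimal hypersurface from above (Marques--Neves type bounds apply to the min-max limit, not to a prescribed slice). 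The paper's resolution is a local deformation result: if the induced metric on the totally geodesic hypersurface has non-negative scalar curvature, then the ambient metric can be deformed in an arbitrarily small neighbourhood of $N$, keeping $\Ric>0$, keeping $N$ totally geodesic with unchanged induced metric, and prescribing $\Ric(\nu,\nu)=\varepsilon$ at $N$ for any $\varepsilon>0$ (their Lemma \ref{L:metric_tot_geod} and Proposition \ref{P:deformation}, which rest on Wraith's gluing/deformation theorem; Lemma \ref{L:metric_tot_geod} shows the scalar curvature condition is in fact necessary). With the induced metric, and hence $\lambda_1>0$, fixed beforehand, one chooses $\varepsilon<\lambda_1$ and the Jacobi operator $\Delta_{g_\ell}+\varepsilon$ has exactly one negative eigenvalue. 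This forces a step your outline misses entirely: verifying that the induced metric on $M\#(-M)\#_\ell(S^1\times S^{n-1})$ coming from the Sha--Yang warping functions has positive scalar curvature (the paper's Lemma 3.1), which is what licenses the deformation.
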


In particular, we have the following Corollary for dimension $n+1=4$:

\begin{maincorollary}\label{maincor}
        Let $N=S^3/\Gamma$ be a spherical space form, i.e.\ $\Gamma\subseteq O(4)$ is a finite subgroup that acts freely on $S^3$. Then, for any $\ell\geq0$, the manifold
        \begin{equation*} 
        N\#(-N)\#_\ell(S^1\times S^2) 
        \end{equation*}
        can be realised as a two-sided minimal hypersurface of index one in a closed Riemannian 4-manifold of positive Ricci curvature.
\end{maincorollary}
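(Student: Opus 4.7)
The plan is to deduce Corollary \ref{maincor} as an immediate specialization of Theorem \ref{mainthm} to $n=3$ and $M=N=S^3/\Gamma$.

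First I would verify the single hypothesis of Theorem \ref{mainthm}: that $N$ admits a Riemannian metric of positive Ricci curvature. This is standard. Since $\Gamma\subseteq O(4)$ acts by isometries on the round unit sphere, the induced quotient metric on $N=S^3/\Gamma$ has constant positive sectional curvature, in particular positive Ricci curvature. Hence the hypothesis of Theorem \ref{mainthm} is satisfied.

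Next I would apply Theorem \ref{mainthm} with $(M,n)=(N,3)$. Noting that $S^{n-1}=S^2$, it produces, for each $\ell\ge 0$, a closed smooth $4$-manifold $\Sigma_0 N\#_\ell(S^2\times S^2)$ carrying a Riemannian metric of positive Ricci curvature, together with an embedded two-sided totally geodesic copy of $N\#(-N)\#_\ell(S^1\times S^2)$ of Morse index one. Since a totally geodesic hypersurface has vanishing second fundamental form, it is in particular minimal (its mean curvature vanishes). This is precisely the conclusion of Corollary \ref{maincor}.

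There is no genuine obstacle internal to this deduction: all the substance — constructing the positive Ricci metric on the spinning $\Sigma_0 N$, extending it across the connect sums with copies of $S^2\times S^2$, and producing an index-one totally geodesic slice of the prescribed topology $N\#(-N)\#_\ell(S^1\times S^2)$ — is packaged into Theorem \ref{mainthm}, which the corollary invokes as a black box. The present step only requires the trivial dimensional bookkeeping ($n+1=4$, $n=3$, $S^{n-1}=S^2$), the elementary observation that totally geodesic implies minimal, and the well-known existence of a positive Ricci metric on any spherical space form.
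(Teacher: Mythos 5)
Your proposal is correct and matches the paper's own (implicit) derivation: the corollary is exactly Theorem \ref{mainthm} specialized to $n=3$ and $M=N=S^3/\Gamma$, using that the round metric descends to give positive Ricci curvature on any spherical space form and that totally geodesic implies minimal. No further comment is needed.
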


The metric in Theorem \ref{mainthm} is constructed as follows: The starting point is the construction of J.-P. Sha and D.-G. Yang \cite{SY91}, which provides a metric of positive Ricci curvature on the space obtained from $M\times S^1$ by performing $(\ell+1)$-surgeries along the $S^1$-factor, resulting in the manifold $\Sigma_0 M\#_\ell(S^2\times S^{n-1})$. We then show that a reflection on the $S^1$-factor of $M\times S^1$ induces an isometric action on this space with fixed point set $M\#(-M)\#_\ell (S^1\times S^{n-1})$, which is a totally geodesic hypersurface, such that the induced metric has positive scalar curvature. Finally, we establish a deformation result (Proposition \ref{P:deformation}) that shows that under these hypotheses the metric on the ambient manifold can be deformed so that the hypersurface is minimal of index one while keeping the ambient Ricci curvature positive.

We note that the radius $r$ of the circle factor in the initial manifold $M\times S^1$ can be chosen arbitrarily small. As a consequence, since varying $r$ only affects the hypersurface in the region where the surgeries are performed, its area has a uniform positive lower bound for all $r$. The same holds for the diameter of the ambient manifold, while the volume of the ambient manifold converges to $0$ as $r\to 0$.

\medskip

%\begin{remark}
Following Chodosh-Li-Stryker in  \cite[Remark B.3]{ChodoshStrykerLi}, the deformation theorem in Proposition \ref{P:deformation} can also be used to prove the following:

\begin{maintheorem}
There exists a closed Riemannian $4$-manifold $(X^4, g)$ with $\Ric > 0$ that admits a complete, two-sided, stable minimal hypersurface immersed in it. 
\end{maintheorem}
%\end{remark} 

In view of Corollary \ref{maincor}, the following question arises naturally:
\begin{question}\label{Q:3-manifolds}
    Let $N$ be closed, oriented $3$-manifold that admits a Riemannian metric of positive scalar curvature. Can $N$ be realised as a minimal hypersurface of index one within a closed Riemannian 4-manifold of positive Ricci curvature?
\end{question}
By the work of R. Schoen and S.-T. Yau \cite{ScYa79} and G. Perelman \cite{perelman2002entropy,perelman2003finite,perelman2003ricci}, see also \cite{Ma12}, a closed, oriented $3$-manifold $N$ admits a Riemannian metric of positive scalar curvature if and only if it is the connected sum of finitely many spherical space forms and copies of $S^1\times S^2$. Besides the topologies arising from the application Corollary~\ref{maincor}, one can also realise spherical space forms as index one minimal hypersurfaces within closed 4-manifolds of positive Ricci curvature. {%\color{blue}
This follows from \cite{zhou2024}, where the author constructs complete Riemannian manifolds with nonnegative Ricci curvature which are isometric to a cone over $S^3/\Gamma$ outside a compact set for any spherical space form $S^3/\Gamma$, combined with a standard doubling and smoothing argument based on Perelman's gluing lemma from \cite{Perelmanbetti}.}

\medskip

This paper is organised as follows: In Section \ref{S:prel} we establish a deformation result to obtain hypersurfaces of index one. In Section \ref{S:proof} we then recall the construction of Sha--Yang and use the deformation result to prove Theorem \ref{mainthm}. Finally, in Appendix \ref{appendix}, we detail the proof of the equivariant deformation result for Ricci curvature, which is used implicitly in the main argument. Although the non-equivariant version goes back to Ehrlich, the extension to group actions involves a subtle averaging issue that had not been fully addressed in the literature.  

\begin{ack}
    The authors thank the referee for helpful comments and suggestions. The second named author would like to thank the Faculty of Mathematics of the University of Vienna for its hospitality while parts of this work were carried out.
\end{ack}

 \section{Preliminaries}\label{S:prel}
In this section, we will gather some preparatory results about metrics of positive Ricci curvature with a totally geodesic hypersurface.

Recall that for a Riemannian metric on a cylinder $(-\delta,\delta)\times N$ of the form $dt^2+g_t$, where $N$ is a manifold and $g_t$ a smoothly varying family of Riemannian metrics on $N$, the second fundamental form of the hypersurface $\{t\}\times N$ with respect to the unit normal $\partial_t$ is given by
\begin{equation} 
\II=-\frac{1}{2}g_t'\, . 
\end{equation}
Here $g_t'$ denotes the $t$-derivative of $g_t$, i.e.\ for $u,v\in T_xN$ we have
\begin{equation}
g_t'(u,v)=\frac{\partial}{\partial t}g_t(u,v)\, . 
\end{equation}
The expression for $\II$ e.g.\ follows from \cite[Section 3.2.1]{Pe16}, where we use that the Hessian $\mathrm{Hess}f$ of the function $f(t,x)=t$ on $(-\delta,\delta)\times N$ is given by $\mathrm{Hess}f=-\frac{1}{2}g_t'$.

In particular, the hypersurface $\{t\}\times N$ is totally geodesic if and only if $g_t'=0$.

	\begin{lemma}\label{L:metric_tot_geod}
		Let $(N^{n-1},g_N)$ be a closed Riemannian manifold. Then the following statements are equivalent:
		\begin{enumerate}
			\item[(i)] For all $\varepsilon>0$ there exists $\delta>0$ and a metric $g^\varepsilon=dt^2+g_t^\varepsilon$ on $(-\delta,\delta)\times N$ of positive Ricci curvature such that $\{0\}\times N$ is totally geodesic, $g_0^\varepsilon=g_N$, and $\Ric(\partial_t,\partial_t)=\varepsilon$ at $t=0$.
			\item[(ii)] $(N,g_N)$ has non-negative scalar curvature.
		\end{enumerate}
	\end{lemma}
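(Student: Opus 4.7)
My plan is to reduce both implications to the three identities for the Ricci tensor of a metric of the form $dt^2+g_t$ along a totally geodesic slice (i.e.\ $g_0'=0$ at $t=0$). Namely, at $\{0\}\times N$,
\begin{align*}
\Ric^g(\partial_t,\partial_t)\big|_{t=0} &= -\tfrac{1}{2}\,\mathrm{tr}_{g_N}(g_0''),\\
\Ric^g(X,\partial_t)\big|_{t=0} &= 0,\\
\Ric^g(X,Y)\big|_{t=0} &= \Ric^{g_N}(X,Y)-\tfrac{1}{2}\,g_0''(X,Y),
\end{align*}
for $X,Y$ tangent to $N$. These follow from the Riccati equation for the shape operator $A(t):=-\tfrac{1}{2}g_t^{-1}g_t'$ (which, since $A(0)=0$, gives $A'(0)=-\tfrac{1}{2}g_N^{-1}g_0''$) together with Gauss' equation for the tangential block. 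Tracing over a $g_N$-orthonormal frame, these yield the scalar-curvature identity $\mathrm{scal}^g|_{t=0}=\mathrm{scal}^{g_N}+2\,\Ric^g(\partial_t,\partial_t)$, which is just Gauss at a totally geodesic slice.

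For the implication $(\mathrm{i})\Rightarrow(\mathrm{ii})$, fix $\varepsilon>0$ and take the metric $g^\varepsilon$ furnished by (i). Positivity of $\Ric^{g^\varepsilon}$ applied to a $g_N$-orthonormal tangential frame, combined with $\Ric^{g^\varepsilon}(\partial_t,\partial_t)=\varepsilon$, gives $\mathrm{scal}^{g^\varepsilon}|_{\{0\}\times N}>\varepsilon$. The scalar-curvature identity above then forces $\mathrm{scal}^{g_N}>-\varepsilon$ pointwise on $N$. Since this holds for every $\varepsilon>0$, we conclude $\mathrm{scal}^{g_N}\geq 0$.

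For $(\mathrm{ii})\Rightarrow(\mathrm{i})$, given $\varepsilon>0$, the idea is to prescribe $g_0''$ directly. I will set on $N$ the smooth symmetric 2-tensor
$$h := \Ric^{g_N}-\frac{\mathrm{scal}^{g_N}+\varepsilon}{n-1}\,g_N,$$
which satisfies $\mathrm{tr}_{g_N}(h)=-\varepsilon$ and $\Ric^{g_N}-h=\tfrac{\mathrm{scal}^{g_N}+\varepsilon}{n-1}g_N>0$ pointwise, using $\mathrm{scal}^{g_N}\geq 0$. Then $g_t^\varepsilon := g_N+t^2 h$ produces a metric $g^\varepsilon=dt^2+g_t^\varepsilon$ on $(-\delta,\delta)\times N$ with $g_0^\varepsilon=g_N$, $(g_t^\varepsilon)'|_{t=0}=0$, and $g_0''=2h$. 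Substituting into the three identities above gives $\Ric^{g^\varepsilon}(\partial_t,\partial_t)|_{t=0}=\varepsilon$ and $\Ric^{g^\varepsilon}(X,Y)|_{t=0}=\tfrac{\mathrm{scal}^{g_N}+\varepsilon}{n-1}g_N(X,Y)$, so $\Ric^{g^\varepsilon}$ is strictly positive definite on $\{0\}\times N$. Compactness of $N$ and continuity of the Ricci tensor in $t$ then yield positivity on a whole slab $(-\delta,\delta)\times N$ for $\delta$ small.

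The only nontrivial step is the choice of $h$ in the $(\mathrm{ii})\Rightarrow(\mathrm{i})$ direction. The hypothesis only controls $\mathrm{scal}^{g_N}$, so a naive $h=-\tfrac{\varepsilon}{n-1}g_N$ fails because $\Ric^{g_N}$ may be arbitrarily negative in some directions. The trick is to absorb $\Ric^{g_N}$ itself into $h$, forcing the difference $\Ric^{g_N}-h$ to collapse to a positive multiple of $g_N$; it is precisely at this algebraic step that $\mathrm{scal}^{g_N}\geq 0$ enters as the sharp hypothesis keeping the conformal factor positive.
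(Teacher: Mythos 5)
Your proposal is correct and follows essentially the same route as the paper: the same Ricci curvature identities at a totally geodesic slice, the same limiting argument for (i)$\Rightarrow$(ii), and in (ii)$\Rightarrow$(i) the identical ansatz $g_t^\varepsilon=g_N+t^2h$ with $h=\Ric^{g_N}-\frac{\mathrm{scal}^{g_N}+\varepsilon}{n-1}g_N$, followed by compactness to get positivity on a small slab. The only cosmetic differences are that you sketch the curvature identities via Riccati/Gauss rather than citing a reference, and you package the forward implication through the traced Gauss identity instead of summing the tangential Ricci terms directly.
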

	\begin{proof}
		For any metric $g$ of the form $dt^2+g_t$ on $(-\delta,\delta)\times N$ such that $\{0\}\times N$ is totally geodesic the Ricci curvatures at $t=0$ are given as follows:
		\begin{align}
			\Ric(\partial_t,\partial_t)&=-\frac{1}{2}\mathrm{tr}_{g_0}g_0''\, ,\\
			\Ric(v,\partial_t)&=0\, ,\\
			\Ric(u,v)&=\Ric^{g_0}(u,v)-\frac{1}{2}g_t''(u,v)
		\end{align}
		for $u,v\in TN$, see e.g.\ \cite[Section 3.2.1]{Pe16}.
		
		Now suppose $\varepsilon>0$ and $g=g^\varepsilon$ as in (i). Then it follows that
		\begin{equation} 
        \varepsilon=-\frac{1}{2}\mathrm{tr}_{g_0}g_0''=\sum_{i=1}^{n-1}\left(\Ric^g(e_i,e_i)-\Ric^{g_0}(e_i,e_i)\right)>-\mathrm{scal}^{g_0}\, , 
        \end{equation}
		where $(e_i)$ is a local orthonormal frame of $(N,g_N)$. Since $g_0=g_N$, it follows that $\mathrm{scal}^{g_N}>-\varepsilon$ for all $\varepsilon$, i.e.\ $\mathrm{scal}^{g_N}\geq 0$.
		
		Conversely, assume that $\mathrm{scal}^{g_N}\geq 0$ and let $\varepsilon>0$. For $\delta>0$ we then define the metric $g^\varepsilon=dt^2+g_t^\varepsilon$ on $(-\delta,\delta)\times N$, where
		\begin{equation} 
        g_t^\varepsilon=g_N+t^2h 
        \end{equation}
		and
		\begin{equation} 
        h=\Ric^{g_N}-\frac{\mathrm{scal}^{g_N}+\varepsilon}{n-1}g_N\, .
        \end{equation}
		Note that $g_t^\varepsilon$ is a Riemannian metric for $\delta$ (and hence $|t|$) sufficiently small. We then have $g_0^\varepsilon=g_N$, ${g_0^\varepsilon}'=0$, and ${g_0^\varepsilon}''=2h$. In particular, $\{0\}\times N$ is totally geodesic. Further, the Ricci curvatures at $t=0$ are given by
		\begin{align}
			\Ric(\partial_t,\partial_t)&=-\mathrm{tr}_{g_N}h=\varepsilon\, ,\\
			\Ric(v,v)&=\Ric^{g_N}(v,v)-h=\frac{\mathrm{scal}^{g_N}+\varepsilon}{n-1}g_N(v,v)>0
		\end{align}
		and $\Ric(\partial_t,v)=0$ for every $v\in TN\setminus\{0\}$. In particular, the metric $g^\varepsilon$ has positive Ricci curvature at $t=0$ and therefore also on $(-\delta,\delta)\times N$ if we choose $\delta$ sufficiently small.  This shows that $g^\varepsilon$ is a metric as in (i).
	\end{proof}

   \begin{remark}
 An observation similar to the one employed in the proof of the implication from (ii) to (i) in Lemma \ref{L:metric_tot_geod} appears in \cite[Appendix B.2]{ChodoshStrykerLi}.       
   \end{remark}

Using Lemma \ref{L:metric_tot_geod}, we prove the main result of this section:

    \begin{proposition}\label{P:deformation}
		Let $(M^n,g_0)$ be a closed Riemannian manifold of positive Ricci curvature and let $N^{n-1}\subseteq M$ be an embedded totally geodesic two-sided hypersurface. Suppose that $g_0$ induces a metric of non-negative scalar curvature on $N$. Then, for any $\varepsilon>0$, there exists a deformation $g_t$, $t\in[0,1]$, of $g_0$ with the following properties:
		\begin{enumerate}
			\item[(i)] The metric $g_t$ has positive Ricci curvature for all $t$;
			\item[(ii)] The deformation $g_t$ is constant outside an arbitrarily small neighbourhood of $N$;
			\item[(iii)] The induced metric on $N$ remains unchanged and $N$ is totally geodesic for all $g_t$;
			\item[(iv)] The normal Ricci curvatures of $g_1$ at $N$ satisfy $\Ric^{g_1}(\nu_N,\nu_N)=\varepsilon$, where $\nu_N$ denotes a unit normal vector field at $N$.
		\end{enumerate}
	\end{proposition}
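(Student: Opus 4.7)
My approach is to construct the deformation explicitly in Fermi coordinates around $N$ and verify positivity of Ricci by a leading-order computation in the tube radius. Two-sidedness and total geodesy of $N$ give a tubular neighbourhood $U = (-\delta_0,\delta_0) \times N$ in which $g_0 = ds^2 + g_s^0$ with $g_0^0 = g_0|_N =: g_N$, $(g_s^0)'|_{s=0} = 0$, and $k_0 := (g_s^0)''|_{s=0}$. Following the proof of (ii)$\Rightarrow$(i) in Lemma \ref{L:metric_tot_geod}, I set $h := \mathrm{Ric}^{g_N} - \tfrac{\mathrm{scal}^{g_N}+\varepsilon}{n-1}g_N$, so that $-\tfrac{1}{2}\mathrm{tr}_{g_N}(2h) = \varepsilon$ and $\mathrm{Ric}^{g_N} - h$ is positive definite. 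For small $R \in (0,\delta_0)$ and a smooth scalar $F: \mathbb{R} \to \mathbb{R}$ with support in $(-R,R)$ satisfying $F(0) = F'(0) = 0$ and $F''(0) = 1$, I would define
\begin{equation*}
g_\tau := ds^2 + g_s^0 + \tau F(s)\bigl(2h - k_0\bigr)\text{ on }U, \qquad g_\tau := g_0 \text{ outside }U, \qquad \tau\in[0,1].
\end{equation*}
Property (ii) is then immediate from $\mathrm{supp}(F) \subset (-R,R)$; property (iii) follows from $F(0) = F'(0) = 0$, which make $(g_s^\tau)|_{s=0} = g_N$ and $(g_s^\tau)'|_{s=0} = 0$ for every $\tau$; and for (iv) one has $(g_s^1)''|_{s=0} = k_0 + (2h-k_0) = 2h$, so the first Ricci formula in the proof of Lemma \ref{L:metric_tot_geod} yields $\mathrm{Ric}^{g_1}(\partial_s,\partial_s)|_{s=0} = \varepsilon$.

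For (i), on $|s|\le R$ the ansatz gives $g_s^\tau - g_N = O(R^2)$ and $(g_s^\tau)' = O(R)$ uniformly in $\tau$, and a direct expansion using the warped-product Ricci formulas recalled before Lemma \ref{L:metric_tot_geod} produces
\begin{equation*}
\mathrm{Ric}^{g_\tau}(\partial_s,\partial_s)|_{(s,x)} = \eta_0(x) + \tau F''(s)\bigl(\varepsilon - \eta_0(x)\bigr) + O(R),
\end{equation*}
with $\eta_0(x) := \mathrm{Ric}^{g_0}(\partial_s,\partial_s)|_{s=0,x} > 0$, while the tangential Ricci is an affine function of $\tau F''(s)$ equal to $\mathrm{Ric}^{g_0}|_{s=0}$ at $\tau F'' = 0$ and to the positive-definite tensor $\tfrac{\mathrm{scal}^{g_N}+\varepsilon}{n-1}g_N$ at $\tau F'' = 1$, and the mixed components $\mathrm{Ric}^{g_\tau}(\partial_s, X)$ are themselves $O(R)$. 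Both leading-order expressions are strictly positive at the endpoints $\tau F''\in\{0,1\}$, hence positive throughout $[0,1]$ by affinity and, by continuity, on an enlarged interval $\tau F''(s) \in [-\mu_0, 1]$ provided $\mu_0 > 0$ is smaller than a positive constant determined only by $\varepsilon$, by $\eta_{\min} := \min_N \eta_0$, and by the positive-definiteness gap of $\mathrm{Ric}^{g_0}|_N$; shrinking $R$ absorbs the $O(R)$ errors.

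The main obstacle is then producing, for arbitrarily small prescribed $\mu_0 > 0$, a smooth $F$ with $F''(0) = 1$ and $F''(s) \in [-\mu_0, 1]$ while $\mathrm{supp}(F) \subset (-R, R)$. Compact support forces the two moment conditions $\int_0^R F''\,ds = 0$ and $\int_0^R sF''\,ds = 0$, which together with $F''(0) = 1$ rule out any non-negative $F''$, so some $\mu_0 > 0$ is unavoidable; the key point is that both constraints can be met with $F''_{\min} \ge -\mu_0$ for any $\mu_0$ by prescribing $F''$ piecewise to equal $1$ on a short central interval of length $\sim \mu_0^2 R$, the constant $-\mu_0$ on a concentric annulus of width $\sim \mu_0 R$, zero on most of the tube, and a tiny positive value $\sim \mu_0^3$ on a thin band near $\pm R$ whose magnitude is fixed by the second-moment equation, then mollifying. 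Choosing $\mu_0$ in accordance with $\varepsilon$ and $g_0$ and $R$ small enough then completes the verification of (i).
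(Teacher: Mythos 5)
Your construction is correct in substance, but it follows a genuinely different route from the paper. The paper's proof, after setting up the same Fermi coordinates $g_0=ds^2+h_s$ and observing that total geodesy means $h_s'|_{s=0}=0$, simply notes that $g_0$ and the model metric $g^\varepsilon=dt^2+g_N+t^2h$ of Lemma \ref{L:metric_tot_geod} agree to first order along $N$ and then invokes the local deformation theorem of Wraith (\cite[Theorem 1.10]{Wr02}, see also \cite{BH22}), which interpolates between Ricci-positive metrics with matching $1$-jets along a hypersurface inside an arbitrarily small neighbourhood. You instead prove the needed special case of that flexibility result by hand: the ansatz $g_\tau=ds^2+g^0_s+\tau F(s)(2h-k_0)$ with $F(0)=F'(0)=0$, $F''(0)=1$ reproduces exactly the target $2$-jet $2h$ at $s=0$, the Ricci tensor is affine in $\tau F''$ up to $O(R)$ errors with both endpoints $\tau F''\in\{0,1\}$ strictly positive (this is the same endpoint computation as in Lemma \ref{L:metric_tot_geod}), and you correctly identify the genuine obstruction -- compact support of $F$ forces the two vanishing-moment conditions on $F''$, hence $F''$ must go negative -- and resolve it by making $\inf F''\geq-\mu_0$ with $\mu_0$ arbitrarily small, which your mass/moment bookkeeping (central plateau of length $\sim\mu_0^2R$, shallow trough of depth $\mu_0$, tiny far band fixing the second moment) does achieve. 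What the paper's approach buys is brevity and generality, since the cited theorems dispose of the interpolation in one stroke; what yours buys is a self-contained argument with explicit control of where the deformation lives and how small the tube must be in terms of $\varepsilon$ and $g_0$. Two points would need attention in a full write-up: the threshold for $\mu_0$ depends not only on $\varepsilon$, $\eta_{\min}$ and the positivity gap of $\Ric^{g_0}|_N$ but also on the norms of $h$ and of $k_0=\partial_s^2 g_s|_{s=0}$ (you implicitly concede this when you later let $\mu_0$ depend on $g_0$ and $\varepsilon$), and the mollification of the piecewise-defined $F''$ must be done so that the two moment conditions still hold exactly, e.g.\ by smoothing first and then re-solving for the free band parameters; both are routine.
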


	\begin{proof} 
		By considering normal coordinates around $N$, since $N\subseteq M$ is two-sided, we can identify a neighbourhood of $N$ with $(-\delta,\delta)\times N$ for some $\delta>0$ and write the metric $g_0$ in this neighbourhood as $g_0=ds^2+h_s$, where $h_s$ is a family of metrics on $N$. The condition of $N$ being totally geodesic is then equivalent to $h_s'=0$ at $s=0$.
		
		Hence, for $g_N=g_0|_N$, the metrics $g^\varepsilon$ from Lemma \ref{L:metric_tot_geod} and $g_0$ coincide up to first order at $\{0\}\times N$. The existence of the required deformation therefore follows from \cite[Theorem 1.10]{Wr02}, see also \cite{BH22}.
	\end{proof}

\begin{remark}
    With the help of Proposition \ref{P:deformation} one can answer in the affirmative the question raised in \cite[Remark B.3]{ChodoshStrykerLi}.
\end{remark}

	\section{Proof of Theorem \ref{mainthm}}\label{S:proof}

    We start with the construction of Sha--Yang \cite{SY91} to construct a metric of positive Ricci curvature on the connected sum $\Sigma_0 M\#_\ell(S^2\times S^{n-1})$, where $M$ admits a metric of positive Ricci curvature $g$. We address the reader also to \cite{AndersonDuke} for a different approach to similar constructions.
    
    For the construction we assume that there are $(\ell+1)$ pairwise disjoint embeddings of discs
    \begin{equation} \bigsqcup_{\ell+1} D^{n}_{r_2}\subseteq (M,g)\, , 
    \end{equation}
    each equipped with the induced metric of a geodesic ball of some radius $r_2>0$ in the round sphere $S^{n}$ of radius $1$. This can always be achieved by local deformations around $(\ell+1)$ points while preserving positive Ricci curvature, see \cite[Theorem 1.10]{Wr02}, \cite{BH22} or \cite[Lemma 4.2]{Re24}.

    We now consider the product metric $r_1^2ds_1^2+g$ on $S^1\times M$. Note that this metric has non-negative Ricci curvature. The connected sum $\Sigma_0 M\#_\ell (S^2\times S^{n-1})$ is then obtained by performing surgery along the corresponding embeddings $\bigsqcup_{\ell+1}(S^1\times D_{r_2}^{n})\subseteq (S^1\times M)$, that is, we remove the interior of each copy of $S^1\times D^{n}_{r_2}$ in $S^1\times M$ and glue in a copy of $D^2\times S^{n-1}$ along the resulting boundary component $S^1\times S^{n-1}$. For a proof that this space results in $\Sigma_0 M\#_\ell(S^2\times S^{n-1})$ as claimed see e.g.\ \cite[Lemma 2.7]{Re24}.
	
	From a metric perspective, the approach taken in \cite{SY91} consists of considering normal coordinates around the centre of each $D_{r_2}^n$ and viewing the metric on each embedded $S^1\times D_{r_2}^{n}$ as a doubly warped product metric with constant warping function for the $S^1$-factor, i.e.
	\begin{equation} 
    r_1^2ds_1^2+(dt^2+\sin^2(t)ds_{n-1}^2)=dt^2+r_1^2ds_1^2+\sin^2(t)ds_{n-1}^2 
    \end{equation}
	for $t\in[0,r_2]$. To perform the surgery, this metric now gets replaced by a different doubly warped product metric
	\begin{equation}\label{EQ:doubly_warped}
		dt^2+h(t)^2ds_1^2+f(t)^2ds_{n-1}^2
	\end{equation}
	on $[t_0,r_2]\times S^1\times S^{n-1}$ for some $t_0<r_2$ and smooth warping functions $h,f\colon[t_0,r_2]\to[0,\infty)$ that coincide with the previous warping functions near $t=r_2$, i.e.:\
	\begin{enumerate}
		\item[(i)] $h(r_2)=r_1$ and all derivatives of $h$ vanish at $t=r_2$;
		\item[(ii)] $f(t)=\sin(t)$ near $t=r_2$.
	\end{enumerate}
	
	To achieve the change in the topology, i.e.\ to replace $S^1\times D^{n}$ by $D^2\times S^{n-1}$, the functions $h$ and $f$ satisfy different boundary conditions at $t=t_0$. More precisely, they satisfy the following:
	\begin{enumerate}
		\item[(iii)] The function $h$ is an odd function at $t=t_0$ with $h'(t_0)=1$ (in particular, $h(t_0)=0$);
		\item[(iv)] The function $f$ is an even function at $t=t_0$ with $f(t_0)>0$ (in particular, $f'(t_0)=0$).
	\end{enumerate}
	With these boundary conditions satisfied, we obtain a smooth metric on $D^2\times S^{n-1}$, see e.g.\ \cite[Proposition 1.4.7]{Pe16}. For a construction of such functions that results in a metric of positive Ricci curvature, we refer to \cite[Lemma 1]{SY91}, \cite[Lemma 3.3]{Re24} or Lemma \ref{L:warping_fcts} below.
	
	Next, we construct a totally geodesic embedding $M\#(-M)\#_\ell(S^1\times S^{n-1})\subseteq\Sigma_0 M\#_\ell(S^2\times S^{n-1})$. For that, we first note that the involution on $S^1\times M$ that is a reflection on $S^1$ and the identity on $M$ is isometric with respect to the product metric $r_1^2ds_1^2+g$ with fixed point set $S^0\times M=M\sqcup (-M)$.
	
	By definition, each embedded copy of $S^1\times D_{r_2}^{n}\subseteq S^1\times M$ is preserved by the involution, which, in normal coordinates on $D_{r_2}^{n}$ is given by a reflection of the $S^1$-factor in $[0,r_2]\times S^1\times S^{n-1}$ and the identity on the remaining factors.
	
	Since this involution is isometric for any metric of the form \eqref{EQ:doubly_warped}, it also defines an isometric involution after preforming the surgery, i.e.\ on $\Sigma_0 M\#_{\ell}(S^2\times S^{n-1})$. Its fixed point set, which is necessarily totally geodesic, is the space obtained from $M\times S^0$ by removing $\bigsqcup_{\ell+1}D_{r_2}^n\times S^0$ and gluing in $(\ell+1)$ copies of $D^1\times S^{n-1}$. In other words, it is the manifold obtained by doubling $M\setminus\bigsqcup_{\ell+1}D^n$ along its boundary, which is diffeomorphic to $M\#(-M)\#_\ell (S^1\times S^{n-1})$.
    
    To apply Proposition \ref{P:deformation} it remains to verify that the induced metric has positive scalar curvature.
	
	\begin{lemma}\label{L:warping_fcts}
		For $r_1>0$ sufficiently small there exists a choice of warping functions $h,f$ such that they satisfy the boundary conditions (i)--(iv), the metric \eqref{EQ:doubly_warped} has positive Ricci curvature, and such that the induced metric on $M\#(-M)\#_\ell (S^1\times S^{n-1})\subseteq\Sigma_0 M\#_\ell(S^2\times S^{n-1})$ has positive scalar curvature.
	\end{lemma}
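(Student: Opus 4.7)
The plan is to compute the induced metric on the hypersurface $N:=M\#(-M)\#_\ell(S^1\times S^{n-1})$ piecewise, reduce positive scalar curvature on the surgery tubes to a pointwise inequality involving only $f$, and then refine the Sha--Yang construction so that this inequality holds simultaneously with the positive Ricci conditions on the ambient doubly warped product once $r_1$ is small.

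First I would identify the induced metric. Outside the surgery regions, $N$ consists of two copies of $(M\setminus\bigsqcup_{\ell+1}D^n_{r_2},g)$, on which the restriction of $g$ has positive Ricci curvature and hence positive scalar curvature as $n\geq 3$. Inside each surgery region $[t_0,r_2]\times S^1\times S^{n-1}$, the fixed set of the $S^1$-reflection consists of two copies of $[t_0,r_2]\times S^{n-1}$ joined at $t=t_0$ (where the $S^1$-factor collapses to a point). Parametrising the union by $\tilde t\in[-(r_2-t_0),r_2-t_0]$ and setting $F(\tilde t):=f(t_0+|\tilde t|)$, the induced metric on each such tube takes the warped product form $d\tilde t^2+F(\tilde t)^2 ds_{n-1}^2$, smooth at $\tilde t=0$ by condition (iv).

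Second, I would apply the warped product scalar curvature formula to obtain
\begin{equation*}
\mathrm{scal}=\frac{(n-1)(n-2)\bigl(1-(F')^2\bigr)}{F^2}-\frac{2(n-1)F''}{F}\, ,
\end{equation*}
so that positivity reduces to the pointwise inequality $(n-2)\bigl(1-(f')^2\bigr)>2ff''$ together with $|f'|<1$ on $[t_0,r_2]$. Near $t=r_2$ this is automatic because $f(t)=\sin(t)$ gives $f''<0$ and $(f')^2<1$; near $t=t_0$ condition (iv) gives $f'(t_0)=0$, and the inequality reduces to the point condition $f''(t_0)<(n-2)/(2f(t_0))$.

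Finally, I would verify that the Sha--Yang warping functions of \cite[Lemma 1]{SY91} or \cite[Lemma 3.3]{Re24} can be refined to satisfy this inequality simultaneously with the ambient positive Ricci conditions. In those constructions, for small $r_1$ the function $f$ is essentially constant on most of $[t_0,r_2]$ with small derivatives, transitioning smoothly to $\sin(t)$ on a short interval near $r_2$; throughout the bulk the scalar inequality is trivial, while in the transition region $(n-2)(1-(f')^2)$ remains bounded below by a positive constant independent of $r_1$. I expect the main obstacle to be the quantitative compatibility check between the positive Ricci inequalities (which couple $f$ to $h$ through the ratios $f'/f$ and $h'/h$) and the positive scalar inequality along this transition region, but once $r_1$ is small the term $-h''/h$ dominates $\mathrm{Ric}(\partial_t,\partial_t)$, leaving ample freedom to prescribe $f''$ so that $2ff''<(n-2)(1-(f')^2)$ holds throughout $[t_0,r_2]$.
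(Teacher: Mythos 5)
Your reduction is the same as the paper's: outside the caps the induced metric is just $g$, on each cap the fixed-point set is $D^1\times S^{n-1}$ with metric $dt^2+f^2ds_{n-1}^2$ (smooth at the centre by (iv)), and positivity of its scalar curvature is exactly the pointwise inequality $2ff''<(n-2)\bigl(1-(f')^2\bigr)$. The gap is in your final step, which is where the lemma actually lives: you never produce warping functions satisfying this inequality together with (i)--(iv) and ambient $\Ric>0$, and the heuristic you offer rests on an incorrect picture of the Sha--Yang/Reiser functions. In the construction the paper uses (\cite[Lemma 3.3]{Re24}), $f$ is \emph{not} essentially constant with small derivatives: it solves $f''=\tfrac{\alpha\lambda_0^2}{2}f^{-\alpha-1}$ with $f(0)=1$, $f'(0)=0$, so it is convex, increasing, with $f'\uparrow\lambda_0>\cos(r_2)$; it is $h=\tfrac{2}{\alpha\lambda_0^2}f'$ that is nearly constant ($\approx r_1$) along most of the tube. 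Consequently the scalar inequality is not ``trivial in the bulk'': the paper verifies it by the explicit computation $\mathrm{scal}=(n-1)f^{-\alpha-2}\bigl(-\alpha\lambda_0^2+(n-2)(\lambda_0^2+(1-\lambda_0^2)f^{\alpha})\bigr)$, which is positive precisely because $\alpha\lambda_0^2<n-2$, and then checks that the bending of $f$ to $N\sin\bigl(\tfrac{t-t'}{N}\bigr)$ and the final rescaling preserve positivity (using the linear dependence on $f''$).

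Your claim that small $r_1$ leaves ``ample freedom'' cannot carry the argument: the inequality $2ff''<(n-2)\bigl(1-(f')^2\bigr)$ is invariant under the rescaling $f\mapsto\tfrac1N f(N\cdot)$ by which $r_1$ is made small, so shrinking $r_1$ relaxes nothing, in particular not near the core sphere where the inequality is tightest. Moreover, the genuine tension is not with $\Ric(\partial_t,\partial_t)$ but with the other directions: in the ansatz $h\propto f'$, positivity of the ambient Ricci curvature in the circle direction requires $\alpha>n-2$, while the hypersurface needs $\alpha\lambda_0^2<n-2$ --- exactly why the paper takes $\alpha\in\bigl(n-2,\tfrac{n-2}{\lambda_0^2}\bigr)$, a window that exists only because $\lambda_0<1$ and has nothing to do with $r_1$. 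Note also that ambient $\Ric>0$ in the $S^{n-1}$-directions reads $-\tfrac{f''}{f}-\tfrac{f'h'}{fh}+(n-2)\tfrac{1-(f')^2}{f^2}>0$, which differs from your target inequality by replacing one $\tfrac{f''}{f}$ with $\tfrac{f'h'}{fh}$; so away from the core the hypersurface condition is not implied by ambient positivity for a general $h$, and ``$-h''/h$ dominates $\Ric(\partial_t,\partial_t)$'' does not address the relevant constraint. A complete proof must exhibit $f,h$ (or verify these coupled inequalities along the whole tube, including the matching region), which is exactly the content of the paper's argument.
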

	\begin{proof}
		We follow the construction of \cite[Lemma 3.3]{Re24}, which is a slight modification of \cite[Lemma 1]{SY91}.
        
        The initial step consists of defining $f\colon[0,\infty)\to(0,\infty)$ as the solution of the initial value problem
		\begin{align}
			f(0)&=1\, ,\\
			f'(0)&=0\, ,\\
			f''&=\frac{\alpha\lambda_0^2}{2}f^{-\alpha-1}\, ,
		\end{align}
		where $\lambda_0\in(\cos(r_2),1)$ and $\alpha\in(n-2,\frac{n-2}{\lambda_0^2})$. In particular, $f''>0$ and hence $f\geq 1$. In addition, by integrating the equation $f''f'=\frac{\alpha\lambda_0^2}{2}f^{-\alpha-1}f'$, we obtain
        \begin{equation}\label{EQ:f'}
            {f'}^2=\lambda_0^2(1-f^{-\alpha}).
        \end{equation}
        
        The function $h$ is then defined by 
		\begin{equation} 
        h=\frac{2}{\alpha\lambda_0^2}f'\, . 
        \end{equation}
        By the definition of $f$, we obtain the following.
        \begin{align}
            h'&=f^{-\alpha-1},\\
            h''&=-(\alpha+1)f^{-\alpha-2}f'.
        \end{align}
        In particular, $h'(0)=1$. Moreover, one can show inductively that at $t=0$, the function $f$ is an even function and the function $h$ is an odd function, and hence these functions satisfy the boundary conditions (iii) and (iv) with $t_0=0$.

        To show that the resulting metric \eqref{EQ:doubly_warped} has positive Ricci curvature, we first calculate
        \begin{align}
            \frac{f''}{f}&=\frac{\alpha\lambda_0^2}{2}f^{-\alpha-2},\label{EQ:f''/f}\\
            \frac{h''}{h}&=-\frac{\alpha(\alpha+1)}{2}\lambda_0^2f^{-\alpha-2},\\
            \frac{1-{f'}^2}{f^2}&=\frac{1-\lambda_0^2+\lambda_0^2f^{-\alpha}}{f^2},\label{EQ:(1-f'^2)/f^2}\\
            \frac{f'h'}{fh}&=\frac{\alpha\lambda_0^2}{2}f^{-\alpha-2}
        \end{align}
        Let $\partial_s\in TS^1$ denote a unit tangent vector of $(S^1,ds_1^2)$ and $v\in TS^{n-1}$ a unit tangent vector of $(S^{n-1},ds_{n-1}^2)$. Then we obtain the following for the Ricci curvatures of the doubly warped product metric \eqref{EQ:doubly_warped} (see e.g.\ \cite[Section 4.2.4]{Pe16} for the formulae for the Ricci curvature).
        \begin{align}
            \Ric(\partial_t,\partial_t)&=-\frac{h''}{h}-(n-1)\frac{f''}{f}\notag\\
            &=\frac{\alpha}{2}\lambda_0^2\left(\alpha+1-(n-1)\right)f^{-\alpha-2}\notag\\
            &>\frac{\alpha}{2}\lambda_0^2\left(n-1-(n-1)\right)f^{-\alpha-2}\notag\\
            &=0,\label{EQ:Ric(t)}\\
            \Ric(\tfrac{\partial_s}{h},\tfrac{\partial_s}{h})&=-\frac{h''}{h}-(n-1)\frac{f'h'}{fh}\notag\\
            &=-\frac{h''}{h}-(n-1)\frac{f''}{f}\notag\\
            &=\Ric(\partial_t,\partial_t)\notag\\
            &>0,\label{EQ:Ric(s)}\\
            \Ric(\tfrac{v}{f},\tfrac{v}{f})&=-\frac{f''}{f}+(n-2)\frac{1-{f'}^2}{f^2}-\frac{f'h'}{fh}\notag\\
            &=\frac{-\alpha\lambda_0^2f^{-\alpha}+(n-2)(1-\lambda_0^2+\lambda_0^2f^{-\alpha}))}{f^2}\notag\\
            &\geq \frac{-\alpha\lambda_0^2+(n-2)}{f^2}\notag\\
            &>0,\label{EQ:Ric(v)}\\
            \Ric(\partial_t,\tfrac{\partial_s}{h})&=\Ric(\partial_t,\tfrac{v}{f})=\Ric(\tfrac{\partial_s}{h},\tfrac{v}{f})=0,
        \end{align}
        where, for \eqref{EQ:Ric(t)}, we used $\alpha>n-2$, and for \eqref{EQ:Ric(v)}, we used $f\geq 1$ and $\alpha<\frac{n-2}{\lambda_0^2}$. It follows that the metric \eqref{EQ:doubly_warped} has positive Ricci curvature.
        
        %and that the resulting metric \eqref{EQ:doubly_warped} has positive Ricci curvature. In addition, $f$ satisfies $f''>0$, $f\ge 1$, and $f'(t)\to \lambda_0$ as $t\to\infty$, and $h$ satisfies $h''<0$ and $h'>0$.
		
		We now analyse the scalar curvatures of the fixed point set of the involution on each copy of $D^2\times S^{n-1}$, i.e.\ on $D^1\times S^{n-1}$. Here, the metric is given by
		\begin{equation} 
        dt^2+h(t)^2ds_0^2+f(t)^2ds_{n-1}^2=dt^2+f(t)^2ds_{n-1}^2\, , 
        \end{equation}
        where we extended the interval $[0,\infty)$ to $\R$ and defined $f(-t)=f(t)$ for $t<0$.
        
		Recall that the scalar curvature of such a warped product metric is given by
        \begin{equation}\label{EQ:scal}
        \mathrm{scal}=-2(n-1)\frac{f''}{f}+(n-1)(n-2)\frac{1-{f'}^2}{f^2}\, ,
        \end{equation}
		see e.g.\ \cite[Section 4.2.3]{Pe16}.
        %As shown in the proof of \cite[Lemma 3.3]{Re24}, we have the following:
		% \begin{align}
		% 	\frac{f''}{f}&=\frac{\alpha\lambda_0^2}{2}f^{-\alpha-2}\, ,\\
		% 	\frac{1-{f'}^2}{f^2}&=\frac{1-\lambda_0^2+\lambda_0^2f^{-\alpha}}{f^2}\, .
		% \end{align}
		Hence, from \eqref{EQ:f''/f} and \eqref{EQ:(1-f'^2)/f^2}, we obtain, by using $f\geq 1$ and $\alpha\lambda_0^2<n-2$,
        \begin{align}
         \nonumber   \mathrm{scal}&=(n-1)f^{-\alpha-2}\left( -\alpha\lambda_0^2+(n-2)(\lambda_0^2+(1-\lambda_0^2)f^\alpha )\right)\\
      \nonumber      &\geq (n-1)f^{-\alpha-2}\left( -\alpha\lambda_0^2+(n-2) \right)\\
            &>0\, .
        \end{align}

        It remains to satisfy the boundary conditions (i) and (ii). We first note that, since $f'(0)=0$ and $f''>0$, we have $f(t)\to\infty$ as $t\to\infty$. Hence, by \eqref{EQ:f'}, $f'(t)\to\lambda_0$ as $t\to\infty$. Since $\lambda_0>\cos(r_2)$, there exists $t_1>0$ with $f'(t_1)>\cos(r_2)$. We now modify the functions $f$ and $h$ near $t_1$ to satisfy the boundary conditions (i) and (ii).
        
        Here we again proceed as in \cite[Lemma 3.3]{Re24} and first replace $f$ by $\bar{f}(t)=N\sin(\frac{t-t'}{N})$ for $t\in [t_1,t_1']$, where $N,t'>0$ are chosen so that $f$ and $\bar{f}$ coincide up to first order at $t=t_1$, and $t_1'$ is chosen as the smallest value $t_1'>t_1$ with $\bar{f}'(t_1')=\cos(r_2)$. Since $\bar{f}''(t)<0<f''(t)$ for $t\in[t_1,t_1']$, it follows that
        \begin{equation}
            -\frac{\bar{f}''}{\bar{f}}\geq -\frac{f''}{f},\quad -\frac{\bar{f}'}{\bar{f}}\geq-\frac{f'}{f}\quad\text{and}\quad \frac{1}{\bar{f}^2}\geq\frac{1}{f^2},
        \end{equation}
        so all Ricci curvatures \eqref{EQ:Ric(t)}--\eqref{EQ:Ric(v)} of \eqref{EQ:doubly_warped} and the scalar curvature \eqref{EQ:scal} increase and are in particular still positive when we replace $f$ by $\bar{f}$ for $t\in[t_1,t_1']$. Since they depend linearly on the second derivative of $f$, smoothing $f$ in a small neighbourhood of $t_1$, as e.g.\ in \cite[Lemma 3.1]{RW23}, results in a smooth function for which the Ricci curvatures \eqref{EQ:Ric(t)}--\eqref{EQ:Ric(v)} and the scalar curvature \eqref{EQ:scal} are again positive. We then have $f(t)=N\sin(\frac{t-t'}{N})$ for $t\leq t_1'$ near $t_1'$ and $f'(t_1')=\cos(r_2)$.

        Next, we modify $h$ near $t_1'$ by choosing $\varepsilon>0$ and a smooth cutoff function $\psi\colon\R\to\R$ that is constant $1$ on $(-\infty,t_1'-\varepsilon]$ and constant $0$ on $[t_1',\infty)$ with $\psi'\leq 0$ globally and $\psi>0$ on $(-\infty,t_1')$. We then replace $h$ on $[t_1'-\varepsilon,t_1']$ by the unique function $\bar{h}$ that satisfies
        \begin{align}
            \bar{h}(t_1'-\varepsilon)&=h(t_1'-\varepsilon),\\
            \bar{h}'&=\psi h'.
        \end{align}
        Since all derivatives of $\psi$ vanish at $t=t_1'-\varepsilon$, replacing $h$ by $\bar{h}$ on $[t_1'-\varepsilon,t_1']$ again results in a smooth function, whose derivatives all vanish at $t=t_1'$. Moreover, we have
        \begin{equation}
            \bar{h}''=\psi' h+\psi h''\leq \psi h''
        \end{equation}
        and hence
        \begin{equation}
            -\frac{\bar{h}''}{\bar{h}'}\geq -\frac{h''}{h'}.
        \end{equation}
        This implies that the Ricci curvature \eqref{EQ:Ric(s)} is positive. Further, the Ricci curvatures \eqref{EQ:Ric(t)} and \eqref{EQ:Ric(v)} are also positive for $\varepsilon$ sufficiently small since $\bar{h}(t_1')\to h(t_1')$ as $\varepsilon\to 0$ and
        \begin{equation}
            \bar{h}'\leq \bar{h}'(t_1'-\varepsilon)=h'(t_1'-\varepsilon)\to h_1'(t_1')
        \end{equation}
        as $\varepsilon\to 0$. Also the scalar curvature \eqref{EQ:scal} remains positive since it is independent of $h$.
        
       % modify $f$ to be of the form $f(t)=N\sin(\frac{t-t'}{N})$ for some $N,t'>0$ near $t=t_1$, and so that all derivatives of $h$ vanish at $t=t_1$ while preserving the positivity of the Ricci curvature of the metric \eqref{EQ:doubly_warped}. The linear dependence of the scalar curvature of the induced metric on the second derivatives of $f$, as well as the fact that it is a linear combination of the terms $-\frac{f''}{f}$ and $\frac{1-{f'}^2}{f^2}$ ensure in the same way that it remains positive throughout the construction.
        
        Finally, the boundary conditions (i) and (ii) are achieved by appropriately rescaling the metric, i.e.\ replacing $h$ and $f$ by $\frac{1}{N} h(\cdot N)$ and $\frac{1}{N} f(\cdot N)$, and shifting their domain by $t'$. Note that a smaller value of $h(t_1)=r_1$ can always be achieved by choosing a larger value of $t_1$, so we can achieve condition (i) for all $r_1$ sufficiently small.
	\end{proof}
	
	Hence, we obtain the following result:
	\begin{proposition}\label{prop:Riceps}
		Let $(M^n,g)$ be a closed Riemannian manifold of dimension $n\geq 3$ with positive Ricci curvature. Then, for all $\ell\in\N$, there exists a metric $g_\ell$ of positive scalar curvature on $M\#(-M)\#_\ell(S^1\times S^{n-1})$ such that for all $\varepsilon>0$ there is a metric $\bar{g}^{\varepsilon}_\ell$ of positive Ricci curvature on $\Sigma_0 M\#_{\ell}(S^2\times S^{n-1})$ and a totally geodesic embedding
		\begin{equation} 
        (M\#(-M)\#_\ell(S^1\times S^{n-1}),g_\ell)\subseteq(\Sigma_0M\#_\ell(S^2\times S^{n-1}),\bar{g}^{\varepsilon}_\ell)
        \end{equation}
		with normal Ricci curvatures $\Ric^{\bar{g}^{\varepsilon}_\ell}(\nu)=\varepsilon$, where $\nu$ is a unit normal vector field of $M\#(-M)\#_\ell(S^1\times S^{n-1})$.
	\end{proposition}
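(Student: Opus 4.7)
The plan is to assemble Proposition \ref{prop:Riceps} directly from the preceding Lemma together with the deformation result of Proposition \ref{P:deformation}. First I would invoke the previous Lemma to choose warping functions $h, f$ satisfying the boundary conditions (i)--(iv) and producing on $\Sigma_0 M \#_\ell(S^2 \times S^{n-1})$ a doubly warped metric $\bar g_\ell$ of positive Ricci curvature, such that the reflection on the $S^1$-factor of $S^1 \times M$ extends to an isometric involution whose fixed point set is the totally geodesic hypersurface $M\#(-M)\#_\ell(S^1 \times S^{n-1})$. The induced metric, which I name $g_\ell$, has positive scalar curvature by the Lemma. This fixes the hypersurface metric $g_\ell$ once and for all, independently of $\varepsilon$.

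Next, for each $\varepsilon > 0$, I would apply Proposition \ref{P:deformation} to the pair $(\Sigma_0 M\#_\ell(S^2\times S^{n-1}), \bar g_\ell)$ and the totally geodesic two-sided hypersurface $(M\#(-M)\#_\ell(S^1\times S^{n-1}), g_\ell)$. The hypotheses of that proposition are all met: the ambient Ricci curvature is positive, the hypersurface is two-sided (this needs a brief verification, but it follows since the normal direction is the spatial coordinate $t$ of the doubly warped construction globally; equivalently, the hypersurface is the fixed set of an orientation-reversing isometric involution), it is totally geodesic, and the induced metric has non-negative (in fact positive) scalar curvature. Proposition \ref{P:deformation} then produces a deformation supported in a small tubular neighbourhood of the hypersurface, giving a new metric $\bar g^\varepsilon_\ell$ that preserves positivity of Ricci curvature, keeps the hypersurface totally geodesic and its induced metric equal to $g_\ell$, and prescribes $\Ric^{\bar g^\varepsilon_\ell}(\nu, \nu) = \varepsilon$ along the hypersurface.

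Two-sidedness is the one point worth checking explicitly: away from the surgery regions the hypersurface sits inside $M \times S^1$ as the fixed point set $\{s_1 = 0\} \sqcup \{s_1 = \pi\} \times M$, which is clearly two-sided; inside each surgery handle it corresponds to $D^1 \times S^{n-1} \subset D^2 \times S^{n-1}$, again two-sided because it is cut out by a global coordinate on the $D^2$-factor. Globally this gives a two-sided embedding, so Proposition \ref{P:deformation} applies without issue.

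I do not expect any serious obstacle here: all of the actual analytic work (building the warping functions, estimating the scalar curvature of $g_\ell$, and running the Wraith-type deformation to prescribe the normal Ricci curvature) has been done in the preceding Lemma and in Proposition \ref{P:deformation}. The only mildly technical point is checking two-sidedness, and verifying that $g_\ell$ can be chosen once and for all independently of $\varepsilon$, so that the family $\bar g^\varepsilon_\ell$ really sits over a single fixed hypersurface metric; both are immediate from the symmetry of the construction.
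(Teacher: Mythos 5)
Your outline breaks down at the very first step: the previous Lemma does \emph{not} give a metric of positive Ricci curvature on all of $\Sigma_0 M\#_\ell(S^2\times S^{n-1})$. It only guarantees positive Ricci curvature of the doubly warped metric \eqref{EQ:doubly_warped} on the glued-in handles $D^2\times S^{n-1}$. Away from the surgery regions the metric is the untouched Riemannian product $r_1^2ds_1^2+g$ on $S^1\times(M\setminus\bigsqcup_{\ell+1}D^n)$, and there $\Ric(\partial_{s_1},\partial_{s_1})=0$. So the metric $\bar g_\ell$ you propose to feed into Proposition \ref{P:deformation} is only non-negatively Ricci curved (strictly positive only in the handles), and the hypothesis ``$\Ric_{g_0}>0$'' of Proposition \ref{P:deformation} fails. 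This is not a cosmetic issue: some global deformation is genuinely needed before the local normal-Ricci prescription can be run.

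The paper closes exactly this gap with an intermediate step you are missing: it invokes Ehrlich's deformation theorem \cite[Theorem 5.1]{Eh76} to pass from a metric with $\Ric\geq 0$ that is strictly positive somewhere to a nearby metric with $\Ric>0$ globally. Two points in that step require care and are handled explicitly there. First, the Ehrlich deformation is arranged to preserve the isometry group, so the hypersurface, being the fixed-point set of the reflection involution, remains totally geodesic after the deformation (otherwise Proposition \ref{P:deformation} again would not apply). Second, the deformation is $C^4$-close to the original metric, so the induced metric on the hypersurface --- which is now a \emph{new} metric $g_\ell$, not the one produced directly by the surgery construction --- still has positive scalar curvature; this $g_\ell$ is fixed once and for all, and only then does Proposition \ref{P:deformation} (which leaves the induced metric unchanged) supply, for each $\varepsilon>0$, the metric $\bar g^\varepsilon_\ell$ with $\Ric^{\bar g^\varepsilon_\ell}(\nu,\nu)=\varepsilon$. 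Your two-sidedness verification is fine, but without the Ehrlich step (or some substitute argument producing strictly positive ambient Ricci curvature while keeping the hypersurface totally geodesic with positive scalar curvature) the proof as proposed does not go through.
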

	\begin{proof}
		We use the construction described above, i.e.\ we perform $(\ell+1)$ surgeries on $S^1\times M$ equipped with the product metric $r_1^2ds_1^2+g$ for some $r_1>0$, which has non-negative Ricci curvature. Note that the induced metric on the embedded totally geodesic hypersurface $S^0\times M$ is given by $g$, which has positive Ricci curvature, and hence also positive scalar curvature.
		
		By Lemma \ref{L:metric_tot_geod}, by choosing $r_1$ sufficiently small, we can now perform $(\ell+1)$ surgeries on this space, i.e.\ remove $(\ell+1)$ pairwise disjoint copies of $S^1\times D_{r_2}^{n}$ for $r_2>0$ sufficiently small, and smoothly replace each one with a copy of $D^2\times S^{n-1}$, equipped with a metric of strictly positive Ricci curvature. In addition, the scalar curvature of the induced metric on the hypersurface $D^1\times S^{n-1}\subseteq D^2\times S^{n-1}$ is strictly positive.
		
		Hence, we overall obtain a metric of non-negative Ricci curvature on $\Sigma_0 M\#_{\ell}(S^2\times S^{n-1})$ with points of strictly positive Ricci curvature, and a totally geodesic embedding of $M\#(-M)\#_{\ell}(S^1\times S^{n-1})$ whose induced metric has positive scalar curvature. We now slightly deform the metric on $\Sigma_0 M\#_{\ell}(S^2\times S^{n-1})$ to have globally strictly positive Ricci curvature while preserving the isometry group of the initial metric. The existence of such a deformation is sketched in \cite[p. 20]{Eh76} and a detailed proof is given in Theorem \ref{T:Ric>0_equiv} below.
        
        %Here we apply the deformation results of Ehrlich \cite[Theorem 5.1]{Eh76}, which show that such a deformation is possible $C^4$-close to the original metric.
		
		%As explained in \cite[p. 20]{Eh76}, this deformation can be arranged to preserve the isometry group of the original metric.
        In particular, since the embedding 
        \begin{equation}
            M\#(-M)\#_{\ell}(S^1\times S^{n-1})\subseteq \Sigma_0 M\#_{\ell}(S^2\times S^{n-1})
        \end{equation} is the fixed point set of the isometric involution induced be a reflection on $S^{1}$, it remains the fixed point set of this involution along the deformation. As a consequence, it remains a totally geodesic hypersurface. In addition, since the deformation is arbitrarily close to the initial metric, the induced metric on $M\#(-M)\#_{\ell}(S^1\times S^{n-1})$ again has positive scalar curvature.
		
		Finally, we apply the deformation of Proposition \ref{P:deformation} to obtain the required metric.
	\end{proof}

   \begin{proof}[End of proof of Theorem \ref{mainthm}]
       For a fixed $\ell\in \N$, let $g_\ell$ be the Riemannian metric on $N_{\ell}:=M\#(-M)\#_\ell(S^1\times S^{n-1})$ obtained from the application of Proposition \ref{prop:Riceps}. The Laplacian $\Delta_{g_\ell}$ has discrete eigenvalues 
       \begin{equation}\label{eigen}
                 0=\lambda_0<\lambda_1\leq\lambda_2\leq \cdots . \end{equation}
For any $0<\varepsilon<\lambda_{1}$, Proposition \eqref{prop:Riceps} also gives an embedding
    	\begin{equation} 
        (M\#(-M)\#_\ell(S^1\times S^{n-1}),g_\ell)\subseteq(\Sigma_0M\#_\ell(S^2\times S^{n-1}),\bar{g}^{\varepsilon}_\ell)
        \end{equation}
    that is totally geodesic and hence minimal. Since $\Ric^{\bar{g}^{\varepsilon}_\ell}(\nu) =\varepsilon$, the Jacobian operator of $N_\ell$ reduces to (c.f.  \cite[Definition 1.31]{ColdingMinicozzibook}): 
   $$ \textrm{L}_{N_\ell}=\Delta_{g_\ell} + \Ric^{\bar{g}^{\varepsilon}_\ell}(\nu) + |\II_{N_\ell}|^2=  \Delta_{g_\ell} + \varepsilon.$$    
 Thus, the Morse index of $\textrm{L}_{N_\ell}$ is equal to the number of eigenvalues of $\Delta_{g_\ell}$ less than $\varepsilon$. By \eqref{eigen}, that number must be 1.
\end{proof}

\begin{remark}
    Since in the above proof we can choose $\varepsilon$ arbitrary, and since the sequence of eigenvalues of $\Delta_{g_\ell}$ diverges, we can also realise arbitrarily high indices with the same argument (however, depending on the multiplicities of the eigenvalues, not every index can be realized).
\end{remark}

\appendix

\section{From non-negative to positive Ricci curvature in the equivariant setting}\label{appendix}

The purpose of this section is to give a detailed proof of the following theorem.

\begin{theorem}\label{T:Ric>0_equiv}
    Let $(M,g)$ be a closed Riemannian manifold of non-negative Ricci curvature on which a compact Lie group $G$ acts via isometries. Assume that there exists a point in $M$ at which all Ricci curvatures of $g$ are positive. Then $M$ admits a $G$-invariant Riemannian metric of positive Ricci curvature.
\end{theorem}
Theorem \ref{T:Ric>0_equiv} is proven in \cite{Eh74,Eh76} in the non-equivariant setting (i.e.\ when $G$ is the trivial group) by using a sequence of conformal modifications of the metric on small metric balls. Further, it is sketched in \cite[p. 20]{Eh76}, and with slightly more details in \cite[pp. 61--62]{Eh74}, how averaging the deformations over the action of $G$ extends this proof to the equivariant setting. However, the formulae for the Ricci curvature of the metric obtained by averaging given in \cite[p. 62]{Eh74} are incorrect, see Remark \ref{R:formulae_incor} below, so it is not guaranteed that one obtains positive Ricci curvature in this way. Therefore, we will use a different approach and first establish the existence of a global deformation as follows.
\begin{theorem}\label{T:Ric>0_def}
    Let $(M,g)$ be a closed Riemannian manifold of non-negative Ricci curvature that contains a point of positive Ricci curvature. Then there exist a smooth family of metrics $g(t)$, $t\in[0,\varepsilon)$, on $M$ and $c>0$ such that $g(0)=g$ and
    \begin{equation}\label{EQ:Ric_der}
        \left.\frac{d}{dt}\right|_{t=0}\Ric^{g(t)}(v,v)>0
    \end{equation}
    for all $v\in TM$ with $g(v,v)=1$ and $\Ric^g(v,v)\leq c$. In particular, $g(t)$ has positive Ricci curvature for all $t>0$ sufficiently small.
\end{theorem}

The family $g(t)$ will be of the form $g(t)=e^{2tf}g$ for a function $f\colon M\to\R$. The Ricci curvatures of such a metric are given as follows.
\begin{lemma}[{\cite[p.\ 156]{Pe16}}]\label{L:conf_change}
    Let $(M^n,g)$ be a Riemannian manifold, $f\colon M\to\R$ a smooth function and $t>0$. Then the Ricci tensor of the metric $g(t)=e^{2tf}g$ is given by
    \begin{equation*}
        \Ric^{g(t)}=\Ric^g-t\left( (n-2)\mathrm{Hess}_f+(\Delta f) g\right)+t^2(n-2)\left( (df)^2-\lVert \nabla f\rVert^2_g g \right).
    \end{equation*}
    Here, the Hessian $\mathrm{Hess}_f$ and Laplacian $\Delta f$ are taken w.r.t.\ the metric $g$ and we use the sign conventions $\mathrm{Hess}_f(u,v)=g(\nabla_u\nabla f,v)$ and $\Delta f=\mathrm{tr}\mathrm{Hess}_f$.
\end{lemma}
It follows that for the family $g(t)$ in Lemma \ref{L:conf_change}, we have
\begin{equation}\label{EQ:conf_change_der}
    \left.\frac{d}{dt}\right|_{t=0}\Ric^{g(t)}=-\left( (n-2)\mathrm{Hess}_f+(\Delta f) g\right).
\end{equation}

%We now recall some results from \cite{Eh76}. First, given a closed manifold $M$, we denote by $\mathcal{R}(M)$ the space of all $C^2$-Riemannian metrics on $M$. The $C^2$-norm $\lVert g\rVert_{C^2}$ of a metric $g\in \mathcal{R}(M)$ is then defined as the maximum of absolute values of the coefficients of $g$ and their first and second derivatives in fixed coordinate charts of $M$.

Now let $g$ be a Riemannian metric on $M$ and let $x\in M$. Then the distance function $r_x(y)=d_g(x,y)$ is smooth outside of $x$ and the cut locus of $x$. For $R>0$ we define the function $\rho_{x,R}=R-r_x$ on the metric ball $B_R(x)$. If $R$ is smaller than the injectivity radius of $g$, then $\rho_{x,R}$ is a smooth function on $B_R(x)\setminus\{x\}$. If we extend $\rho_{x,R}$ constantly by $0$ outside of $B_R(x)$, then we obtain a continuous function on $M$. Moreover, for any $k\in\N$, the function $\rho^k_{x,R}$ is of class $C^{k-1}$ on $M\setminus\{x\}$. Finally, to obtain a function that has suitable differentiability conditions on all of $M$, we smooth $\rho_{x,R}$ in a small neighbourhood of $x$.

We set
\begin{equation}
    f_{x,R}(y)=-\rho_{x,R}(y)^5\, .
\end{equation}
Then $f_{x,R}$ is of class $C^4$. Since the analysis of the Hessian $\mathrm{Hess}_{f_{x,R}}$ involves the Hessian of the distance function $r_x$, it is convenient to restrict to metric balls that are \emph{convex}, that is, we choose $R(g)>0$ such that for all $x\in M$
\begin{equation}\label{EQ:dist_Hess}
    \left(2-\tfrac{1}{4}\right)g\leq \mathrm{Hess}_{r_x^2}\leq \left(2+\tfrac{1}{4}\right)g
\end{equation}
holds at all points $y\in B_{R(g)}(x)$. Note that $R(g)$ exists by compactness of $M$ and the fact that the Hessian $\mathrm{Hess}_{r_x^2}$ is given by $2g$ at $x$ (see also \cite[Theorem 3.5]{Eh76}).

% The result in this context we will use is the following.
% \begin{proposition}[{\cite[Theorem 3.5]{Eh76}}]\label{P:convex}
%     Let $(M,g_0)$ be a closed Riemannian manifold. Then there are constants $\delta(g_0)>0$ and $R(g_0)>0$ such that for any metric $g\in\mathcal{R}(M)$ with $\lVert g-g_0\rVert_{C^2}<\delta(g_0)$ for any $x\in M$ any $g$-metric ball around $x$ of radius at most $R(g_0)$ is $g$-convex and
%     \begin{equation}\label{EQ:dist_Hess}
%         \left(2-\tfrac{1}{4}\right)g\leq \mathrm{Hess}_{r_x^2}\leq \left(2+\tfrac{1}{4}\right)g.
%     \end{equation}
% \end{proposition}
This allows us to prove the following.
\begin{proposition}\label{P:Hessian_est}
    There are constants $\delta=\delta(n)\in(0,\frac{1}{3})$ and $\varepsilon=\varepsilon(n)>0$ that only depend on $n$ such that the following holds. Let $(M^n,g)$ be a closed Riemannian manifold and consider the constant $R(g)$ obtained in \eqref{EQ:dist_Hess}. Then for all $x\in M$ and $R\in(0,R(g)]$ the function $f=f_{x,R}$ satisfies the following:
    \begin{align}
        -\left( (n-2)\mathrm{Hess}_{f}(v,v)+(\Delta f) g(v,v) \right)\geq0\quad &\text{for }r_x\in[(1-3\delta) R,R],\label{EQ:delta_est1}\\
        -\left( (n-2)\mathrm{Hess}_{f}(v,v)+(\Delta f) g(v,v) \right)\geq\varepsilon R^3\quad &\text{for }r_x\in[(1-3\delta) R,(1-\delta)R],\label{EQ:delta_est2}
    \end{align}
    where $v$ is a unit tangent vector.
\end{proposition}
\begin{proof}
    We follow the calculations in \cite[p. 16]{Eh76}. We first consider the Hessian $\mathrm{Hess}_{f}$.
    \begin{align}
        -\mathrm{Hess}_f(v,v)&=\mathrm{Hess}_{\rho_{x,R}^5}(v,v)\notag\\
        &=g(\nabla_v\nabla(\rho_{x,R}^5),v)\notag\\
        &=5v(\rho_{x,R}^4)g(\nabla \rho_{x,R},v)+5\rho_{x,R}^4g(\nabla_v\nabla\rho_{x,R},v)\notag\\
        &=20v(\rho_{x,R})^2\rho_{x,R}^3+5\rho_{x,R}^4\mathrm{Hess}_{\rho_{x,R}}(v,v)\notag\\
        &\geq 5\rho_{x,R}^4\mathrm{Hess}_{\rho_{x,R}}(v,v).
    \end{align}
    Hence, for a local orthonormal frame $(e_1,\dots,e_n)$ with $e_1=\nabla r_x$, we have
    \begin{align}
        -\Delta f&=-\sum_i \mathrm{Hess}_{f}(e_i,e_i)\notag\\
        &=20e_1(\rho_{x,R})^2\rho_{x,R}^3+5\rho_{x,R}^4\Delta\rho_{x,R}\notag\\
        &=20\rho_{x,R}^3+5\rho_{x,R}^4\Delta \rho_{x,R}.
    \end{align}
    
    Since $\rho_{x,R}=R-r_x$ on a small neighbourhood of $x$, it therefore remains to consider $\mathrm{Hess}_{r_x}$. We have
    \begin{equation}
        \mathrm{Hess}_{r_x^2}=2r_x\mathrm{Hess}_{r_x}+2(dr_x)^2.
    \end{equation}
    By \eqref{EQ:dist_Hess}, we therefore obtain
    \begin{equation}
        \frac{1-\frac{1}{8}}{r_x}g-\frac{(dr_x)^2}{r_x}\leq \mathrm{Hess}_{r_x}\leq \frac{1+\frac{1}{8}}{r_x}g-\frac{(dr_x)^2}{r_x}.
    \end{equation}
    Hence,
    \begin{equation*}
        |\mathrm{Hess}_{\rho_{x,R}}(v,v)|=|\mathrm{Hess}_{r_x}(v,v)|\leq \frac{9}{8(R-\rho_{x,R})}g(v,v)=\frac{9}{8(R-\rho_{x,R})}
    \end{equation*}
    and, since $\mathrm{Hess}_{r_x}(\nabla r_x,\nabla r_x)=0$,
    \begin{equation}
        |\Delta \rho_{x,R}|\leq |\mathrm{Hess}_{r_x}(e_1,e_1)|+\sum_{i=2}^n|\mathrm{Hess}_{r_x}(e_i,e_i)|\leq \frac{9(n-1)}{8(R-\rho_{x,R})}.
    \end{equation}

    Altogether we therefore obtain
    \begin{align}
        -((n-2)\mathrm{Hess}_f(v,v)&+(\Delta f)g(v,v))\notag\\
        &\geq 5(n-2)\rho_{x,R}^4\mathrm{Hess}_{\rho_{x,R}}+5\rho_{x,R}^4\Delta \rho_{x,R}+20 \rho_{x,R}^3\notag\\
        &\geq 20\rho_{x,R}^3-5\rho_{x,R}^4\frac{9(2n-3)}{8(R-\rho_{x,R})}\notag\\
        &=5\rho_{x,R}^3\left( 4-\rho_{x,R}\frac{9(2n-3)}{8(R-\rho_{x,R})} \right).
    \end{align}

    We set $\delta(n)=\frac{16}{3(9(2n-3)+16)}$, so that $\frac{9}{8}(2n-3)\frac{3\delta(n)}{1-3(\delta(n))}=2$. Then, for $r_x\in[(1-3\delta(n))R,R]$, i.e.\ $\rho_{x,R}\in[0,3\delta(n)R]$, we have
    \begin{align}
        4-\rho_{x,R}\frac{9(2n-3)}{8r_x}\geq 4-3\delta(n)R\frac{9(2n-3)}{8(1-3\delta(n))R}=2
    \end{align}
    and so $-((n-2)\mathrm{Hess}_f(v,v)+(\Delta f)g(v,v))\geq 0$.

    Finally, set $\varepsilon(n)=10\delta(n)^3$. Then for $r_x\in[(1-3\delta(n))R,(1-\delta(n))R]$, i.e.\ $\rho_{x,R}\in[\delta(n)R,3\delta(n)R]$, we have
    \begin{align}
        -((n-2)\mathrm{Hess}_f(v,v)&+(\Delta f)g(v,v))\notag\\
        &\geq 5\delta(n)^3R^3\left( 4-3\delta(n)R\frac{9(2n-3)}{8(1-3\delta(n))R} \right)\notag\\
        &=10\delta(n)^3R^3\notag\\
        &=\varepsilon(n)^3 R^3.
    \end{align}
\end{proof}

\begin{proof}[Proof of Theorem \ref{T:Ric>0_def}]
    Let $x_0\in M$ be a point at which all the Ricci curvatures of $g$ are positive. We choose $c>0$ so that the Ricci curvature at $x_0$ is at least $2c$. Let $R(g)$ be the constant from \eqref{EQ:dist_Hess} and let $\varepsilon=\varepsilon(n)$, $\delta=\delta(n)$ be the constants from Proposition \ref{P:Hessian_est}. We choose $R\in(0,R(g)]$ so small that all points in $B_{(1-3\delta)R}(x)$ have Ricci curvature at least $c$.

    We now cover $M$ with metric balls of radius $R$ as follows. We set $x^0_{0}=x_0$. Since the union
    \begin{equation}
        \bigcup_{x\in B_{2\delta R}(x_0)}B_{(1-\delta)R}(x)
    \end{equation}
    covers all of $\overline{B}_R(x_0)\setminus B_{(1-3\delta)R}(x)$, by compactness, we can choose finitely many points $x_1^1,\dots,x_1^{k_1}\in B_{2\delta R}(x_0)$ such that the union\linebreak $\bigcup_{i} B_{(1-\delta)R}(x_1^i)$ covers all of $\overline{B}_{R}(x_0)\setminus B_{(1-3\delta)R}(x)$. Note that the balls $B_{(1-3\delta)R}(x_1^i)$ lie entirely in $B_{(1-\delta)}(x_0)$.

    More generally, set $B_j=B_{(1+(j-1)\delta)R}(x_0)$ and assume we have covered the ball $\overline{B}_j$. We then choose points
    \begin{equation}
        x_{j+1}^1,\dots,x_{j+1}^{k_{j+1}}\in B_{\delta(j+2)R}(x_0)
    \end{equation}
    (so the balls $B_{(1-3\delta)R}(x_j^i)$ lie entirely in $B_j$) such that the union\linebreak $\bigcup_{i} B_{(1-\delta)R}(x_{j+1}^i)$ covers all of $\overline{B}_{j+1}\setminus B_j$.
    
    Since $M$ is compact, there exists $\ell\in\N$ such that $B_j=M$ for all $j\geq \ell$.
    % \begin{equation}
    %     B_{(1-3\delta)R}(x_0)\cup\bigcup_{j=0}^\ell\bigcup_{i=1}^{k_j}\left(\overline{B}_{(1-\delta)R}(x_j^i)\setminus B_{(1-3\delta)R}(x_j^i)\right)=M.
    % \end{equation}
    %We note that this sequence of points $\{x_j^i\}$ is chosen so that the following property holds. For all $x\in M$, either $x\in B_{(1-3\delta)R}(x_0)$, or there exists a point $x_j^i$ such that $d(x_j^i,x)\in[(1-3\delta)R,(1-\delta)R)$ and we have $d(x_{j}^{i'},x)\geq (1-3\delta)R$ for all $i'\in\{1,\dots,k_j\}$ and $d(x_{j'}^{i'},x)\geq (1-\delta)R$ for all $j'<j$ and $i'\in\{1,\dots,k_{j'}\}$.
    For $\lambda_0,\dots,\lambda_\ell>0$ we then set
    \begin{equation}
        f=\sum_{j=0}^\ell\lambda_j\sum_{i=1}^{k_j}f_{x_{j}^i,R}.
    \end{equation}
    We will show that we can choose $\lambda_0,\dots,\lambda_\ell$ such that $-((n-2)\mathrm{Hess}_f+(\Delta f)g)$ is positive on $M\setminus B_{(1-3\delta)R}(x_0)$. Since all points in $B_{(1-3\delta)R}(x_0)$ have Ricci curvatures at least $c$, together with \eqref{EQ:conf_change_der}, this shows that \eqref{EQ:Ric_der} holds for the metric $g(t)=e^{2tf}g$.

    First, consider points $x\in \overline{B}_0\setminus B_{(1-3\delta)R}(x_0)$, i.e.\ $x\in \overline{B}_{(1-\delta)R}(x_0)\setminus B_{(1-3\delta)R}(x_0)$. By Proposition \ref{P:Hessian_est}, for a unit vector $v\in T_xM$, we have
    \begin{align*}
        -((n-2)&\mathrm{Hess}_f(v,v)+(\Delta f)g(v,v))\notag\\
        =&-\lambda_0((n-2)\mathrm{Hess}_{f_{x_0,R}}(v,v)+(\Delta f_{x_0,R})g(v,v))\notag\\
        &-\sum_{j=1}^\ell\lambda_j\sum_{i=1}^{k_j}((n-2)\mathrm{Hess}_{f_{x_j^i,R}}(v,v)+(\Delta f_{x_j^i,R})g(v,v))\notag\\
        \geq &\lambda_0\varepsilon R^3-\sum_{j=1}^\ell \lambda_j\sum_{i=1}^{k_j}\left|(n-2)\mathrm{Hess}_{f_{x_j^i,R}}(v,v)+(\Delta f_{x_j^i,R})g(v,v)\right|.
    \end{align*}
    This expression is positive for $\lambda_0=1$ and for all $\lambda_j$ with $j\in\{1,\dots,\ell\}$ sufficiently small since each term 
    \begin{equation}
    |(n-2)\mathrm{Hess}_{f_{x_j^i,R}}(v,v)+(\Delta f_{x_j^i,R})g(v,v)|
    \end{equation}
    is uniformly bounded by the compactness of $M$.
    
    Next, consider $x\in \overline{B}_{j_0+1}\setminus B_{j_0}$ and assume $\lambda_0,\dots,\lambda_{j_0}$ are fixed. Then, by construction, there exists $i_0\in \{1,\dots,k_{j_0+1}\}$ such that
    \begin{equation}
        d(x_{j_0+1}^{i_0},x)\in[(1-3\delta)R,(1-\delta)R].
    \end{equation}
    Further, we have $d(x_{j_0+1}^i,x)\geq (1-3\delta)R$ for all $i\in\{1,\dots,k_{j_0+1}\}$ and $d(x_j^i,x)\geq (1-\delta)R$ for all $j<j_0+1$ and $i\in\{1,\dots,k_j\}$. Hence, by \eqref{EQ:delta_est1} and \eqref{EQ:delta_est2}, we have
    \begin{align*}
        -((n-2)&\mathrm{Hess}_f(v,v)+(\Delta f)g(v,v))\notag\\
        =& -\sum_{j=0}^{j_0}\lambda_j\sum_{i=1}^{k_j}((n-2)\mathrm{Hess}_{f_{x_j^i,R}}(v,v)+(\Delta f_{x_j^i,R})g(v,v))\notag\\
        &-\lambda_{j_0+1}((n-2)\mathrm{Hess}_{f_{x_{j_0+1}^{i_0},R}}(v,v)+(\Delta f_{x_{j_0+1}^{i_0},R})g(v,v))\notag\\
        &-\lambda_{j_0+1}\sum_{i\neq i_0}((n-2)\mathrm{Hess}_{f_{x_{j_0+1}^i,R}}(v,v)+(\Delta f_{x_{j_0+1}^i,R})g(v,v))\notag\\
        &-\sum_{j=j_0+2}^\ell\lambda_j\sum_{i=1}^{k_j}((n-2)\mathrm{Hess}_{f_{x_j^i,R}}(v,v)+(\Delta f_{x_j^i,R})g(v,v))\notag\\
        \geq & \lambda_{j_0+1}\varepsilon R^3\notag\\
        &-\sum_{j=j_0+2}^\ell \lambda_j\sum_{i=1}^{k_j}\left|(n-2)\mathrm{Hess}_{f_{x_j^i,R}}(v,v)+(\Delta f_{x_j^i,R})g(v,v)\right|.
    \end{align*}
    Again by bounding the terms 
    \begin{equation}
    |(n-2)\mathrm{Hess}_{f_{x_j^i,R}}(v,v)+(\Delta f_{x_j^i,R})g(v,v)| 
    \end{equation}
    uniformly from above, we obtain that this expression is positive for all $\lambda_{j_0+2},\dots,\lambda_\ell$ sufficiently small.
    This shows that \eqref{EQ:Ric_der} holds for $g(t)=e^{2tf}g$. Since $f$ is merely $C^4$, we apply a smoothing to $f$. Since \eqref{EQ:Ric_der} is an open condition, it is preserved for smoothings of $f$ that are sufficiently $C^2$-close.

    Finally, since the set of unit tangent vectors $v\in TM$ with $\mathrm{Ric}^g(v,v)\leq c$ is compact, there exists $t_0>0$ such that $g(t)$ has positive Ricci curvature for all $t\in(0,t_0]$.
\end{proof}

\begin{proposition}\label{P:averaging}
    Let $M$ be a closed Riemannian manifold on which a compact Lie group $G$ acts. Let $g(t)$, $t\in[0,\varepsilon)$ be a smooth family of Riemannian metrics on $M$ such that $g=g(0)$ is $G$-invariant. Define the family of metrics
    \begin{equation}
        \tilde{g}(t)=\int_G\varphi^*g(t)d\mu(\varphi),
    \end{equation}
    where $d\mu$ is the normalized Haar measure of $G$. Then
    \begin{equation}\label{EQ:Ric_av}
        \left.\frac{d}{dt}\right|_{t=0}\Ric^{\tilde{g}(t)}=\int_G\varphi^*\left(\left.\frac{d}{dt}\right|_{t=0}\Ric^{g(t)}\right)d\mu(\varphi). 
    \end{equation}
\end{proposition}
\begin{proof}
    We follow Weinstein's argument \cite{We70} for the sectional curvature. For that, recall that for a vector bundle $E\to M$, the space of smooth sections $C^\infty(M,E)$ together with the uniform $C^k$-metrics on $M$ with respect to a fixed background metric on $M$ and a covariant derivative on $E$ is a Fréchet manifold. Similarly, the space of Riemannian metrics $\mathcal{R}(M)$ on $M$ is a Fréchet manifold by viewing it as an open cone in $C^\infty(M,S^2 T^*M)$, where $S^2 T^*M$ is the bundle of symmetric $(0,2)$-tensors on $M$ (see e.g.\ \cite[Chapter 1]{TW15} for more details).

    Then the Ricci tensor can be viewed as a smooth map
    \begin{equation}
        \Ric\colon\mathcal{R}(M)\to C^\infty(M,S^2 T^*M),
    \end{equation}
    between Fréchet manifolds. Hence, for every $g\in\mathcal{R}(M)$, we can consider its differential
    \begin{equation}
        D_g\Ric\colon C^\infty(M,S^2 T^*M)\to C^\infty(M,S^2 T^*M),
    \end{equation}
    which is a linear differential operator of order 2. Hence, we obtain the following:
    \begin{align}
        \left.\frac{d}{dt}\right|_{t=0}\Ric^{\tilde{g}(t)}&=D_{g}\Ric\left( \left.\frac{d}{dt}\right|_{t=0}\tilde{g}(t)\right)\notag\\
        &=D_g\Ric\int_G\varphi^*\left( \left.\frac{d}{dt}\right|_{t=0}g(t)\right)d\mu(\varphi)\notag\\
        &=\int_G D_g\Ric\left(\varphi^*\left( \left.\frac{d}{dt}\right|_{t=0}g(t)\right)\right)d\mu(\varphi)\label{EQ:int_swap}\\
        &=\int_G\varphi^*\left( D_g\Ric\left( \left.\frac{d}{dt}\right|_{t=0}g(t)\right)  \right)d\mu(\varphi)\label{EQ:diff_inv}\\
        &=\int_G\varphi^*\left( \left.\frac{d}{dt}\right|_{t=0}\Ric^{g(t)}\right)d\mu(\varphi).\notag
    \end{align}
    To obtain \eqref{EQ:int_swap}, we used that $D_g\Ric$ is a linear differential operator, and to obtain \eqref{EQ:diff_inv} we used that $\Ric$ is equivariant with respect to the pull-back action of the diffeomorphism group and hence
    \begin{equation}
        D_{\varphi^*g}\Ric(\varphi^*T)=\varphi^*(D_g\Ric(T))
    \end{equation}
    for any diffeomorphism $\varphi\colon M\to M$ and any $T\in C^\infty(M,S^2 T^*M)$. Hence, the assumption $\varphi^*g=g$ for all $\varphi\in G$ yields \eqref{EQ:diff_inv}.
\end{proof}
\begin{remark}\label{R:formulae_incor}
    In \cite[p.\ 62]{Eh74} it is claimed that an equation similar to \eqref{EQ:Ric_av} holds for all values of $t$ and hence integrating provides an analogous equation for the Ricci tensor itself. However, since it is crucial for \eqref{EQ:diff_inv} that $g(0)$ is $G$-invariant, these formulae cannot be derived in this way.
    
    In fact, since the space of metrics on $M$ is path-connected, this would imply that the averaging construction of Proposition \ref{P:averaging} commutes with the Ricci tensor. However, this is not correct as can be seen, for example, by considering conformal changes of metrics as in Lemma \ref{L:conf_change} and using that the Ricci curvature is not linear in $e^{2tf}$.
\end{remark}

\begin{proof}[Proof of Theorem \ref{T:Ric>0_equiv}]
    Assume that $g$ is $G$-invariant. By Theorem \ref{T:Ric>0_def}, there is a smooth family of metrics $g(t)$, $t\in[0,\varepsilon)$ such that $\Ric^{g(t)}$ has positive derivative at $t=0$ on unit vectors with Ricci curvature at most $c$ for some $c>0$.

    We consider the averaged family $\tilde{g}(t)$ obtained from $g(t)$ by averaging as in Proposition \ref{P:averaging}, which is $G$-invariant by construction. Then, since $g$ is $G$-invariant, the set
    \begin{equation}
        R_c=\{v\in TM\mid g(v,v)=1\text{ and }\Ric^g(v,v)\leq c\}
    \end{equation}
    is $G$-invariant as well. Hence, by \eqref{EQ:Ric_av}, we obtain
    \begin{equation}
        \left.\frac{d}{dt}\right|_{t=0}\Ric^{\tilde{g}(t)}(v,v)=\int_G\left.\frac{d}{dt}\right|_{t=0}\Ric^{g(t)}(\varphi_*v,\varphi_* v)d\mu(\varphi)>0 
    \end{equation}
    for all $v\in R_c$, so \eqref{EQ:Ric_der} holds for the family $\tilde{g}(t)$ as well. It follows that $\tilde{g}(t)$ has positive Ricci curvature for all $t$ sufficiently small as in Theorem \ref{T:Ric>0_def}.
\end{proof}

\section*{Declarations} 
On behalf of all authors, the corresponding author states that there is no conflict of interest or associated data in our manuscript.

\bibliography{bib} 
\bibliographystyle{amsalpha}
	
\end{document}